\numberwithin{equation}{section}
\def\RR{\mathbb{R}}
\def\MM{\mathbb{M}}
\def\eps{\varepsilon}
\def\wto{\rightharpoonup}
\def\Aff{{\rm Aff}}
\def\det{{\rm det}}
\def\Z{\mathcal{Z}}
\def\rank{{\rm rank}}
\def\dom{{\rm dom}}
\newtheorem{theorem}{Theorem}[section]
\newtheorem{lemma}[theorem]{Lemma}
\newtheorem{proposition}[theorem]{Proposition}
\newtheorem{corollary}[theorem]{Corollary}
\theoremstyle{definition}
\newtheorem{definition}[theorem]{Definition}
\theoremstyle{remark}
\newtheorem{remark}[theorem]{Remark}
\begin{document}

\title{On the homogenization of singular integrals}

\author[O. Anza Hafsa]{Omar Anza Hafsa}
\address{UNIVERSITE MONTPELLIER II, UMR-CNRS 5508, LMGC, Place Eug\`ene Bataillon, 34095 Montpellier, France.}
\email{Omar.Anza-Hafsa@univ-montp2.fr}

\author[M. L. Leghmizi]{Mohamed Lamine Leghmizi}
\address{UNIVERSITE DE MEDEA, LMP2M (Laboratoire de M\'ecanique, Physique et Mod\'elisation Math\'ematique), Quartier Ain-D'Heb, M\'ed\'ea (26000), Algerie. }
\email{leghmizi@hotmail.com}

\author[J.-P. Mandallena]{Jean-Philippe Mandallena}
\address{UNIVERSITE DE NIMES, Site des Carmes, Place Gabriel P\'eri, 30021 N\^\i mes, France.}
\email{jean-philippe.mandallena@unimes.fr}

\date{March 2009}
\keywords{Homogenization, $\Gamma$-convergence, singular integrand, determinant constraints type, nonlinear elasticity}

\begin{abstract}
We study periodic homogenization by $\Gamma$-convergence of some singular integral functionals related to nonlinear elasticity.
\end{abstract}

\maketitle


\section{Introduction}

Consider the family of  integral functionals $\{I_\eps\}_{\eps>0}$ given by
\begin{equation}\label{IntegralFunct}
I_\eps(\phi):=\int_{\Omega}W\left({x\over\eps},\nabla\phi(x)\right)dx,
\end{equation}
where $\eps>0$ is a (small) parameter, $\Omega\subset\RR^N$ a bounded open set with $|\partial\Omega|=0$ (where $|\cdot|$ denotes the Lebesgue measure in $\RR^N$), $\phi\in W^{1,p}(\Omega;\RR^m)$ with $p\geq1$ and $W:\RR^N\times \MM^{m\times N}\to[0,+\infty]$ assumed to be a normal integrand\footnote{A function $W:\RR^N\times\MM^{m\times N}\to[0,+\infty]$ is called a normal integrand if for every $x\in\RR^N$, $W(x,\cdot)$ is lower semicontinuous and $W$ is measurable.}, where $\MM^{m\times N}$ denotes the space of real $m\times N$ matrices, is $p$-coercive, i.e., 
\begin{equation}\label{CoercivityOfW}
W(x,\xi)\geq C|\xi|^p\hbox{ for all }(x,\xi)\in\RR^N\times\MM^{m\times N}\hbox{ and some }C>0,
\end{equation}
$1$-periodic, i.e.,  
\begin{equation}\label{PeriodicityOfW}
W(x+e_i,\xi)=W(x,\xi) \hbox{ for all }(x,\xi)\in\RR^N\times\MM^{m\times N}\hbox{ and }i=1,\cdots,N,
\end{equation}
where $(e_1,\cdots,e_N)$ is the standard basis of $\RR^N$, and of $p$-polynomial growth, i.e.,
\begin{equation}\label{GrowthCondition}
W(x,\xi)\leq c(1+|\xi|^p) \hbox{ for all }(x,\xi)\in\RR^N\times\MM^{m\times N}\hbox{ and some }c>0.
\end{equation}
In \cite{braides85} (see also \cite[Theorem 14.5 p. 111]{braides-defranceschi98}) Braides proved that $I_\eps$ $\Gamma$-converge with respect to the $L^p(\Omega;\RR^m)$-convergence as $\eps\to 0$ (see Definition \ref{DefGamma-Convergence}) to the functional $I_{\rm hom}$ defined on $W^{1,p}(\Omega;\RR^m)$ by
\begin{equation}\label{IntegralFunctHom}
I_{\rm hom}(\phi):=\int_\Omega W_{\rm hom}(\nabla\phi(x))dx
\end{equation}
with $W_{\rm hom}:\MM^{m\times N}\to[0,+\infty]$ given by
\begin{equation}\label{Homogenized-Density}
W_{\rm hom}(\xi):=\inf_{k\geq 1}{1\over k^N}\inf\left\{\int_{kY}W(x,\xi+\nabla\varphi(x))dx:\varphi\in W^{1,p}_0(kY;\RR^m)\right\}
\end{equation}
with $Y:=]0,1[^N$ and $W^{1,p}_0(kY;\RR^m):=\{\varphi\in W^{1,p}(kY;\RR^m):\varphi=0\hbox{ on }\partial(kY)\}$. This result established a suitable variational framework to deal with homogenization problems in the vectorial case: it is the point of departure of many works on the subject related to nonlinear elasticity. However, because of the $p$-polynomial growth assumption \eqref{GrowthCondition}, Braides's homogenization theorem is not compatible with the following two important physical properties: the noninterpenetration of the matter, i.e., $W(x,\xi)=+\infty$ if and only if $\det \xi\leq 0$, and the necessity of an infinite amount of energy to compress a finite volume into zero volume, i.e., $W(x,\xi)\to +\infty$ as $\det \xi\to 0$, where $\det \xi$ denotes the determinant of the $N\times N$ matrix $\xi$. 

In this paper we show that by using Braides's homogenization theorem (see Theorem \ref{BraidesTheorem}) and a slight generalization of a relaxation theorem (see Theorem \ref{GeneralizedRelaxationTheorem}), that we obtained in \cite{oah-jpm07,oah-jpm08a}, it is possible to establish a homogenization theorem (see Theorem \ref{HomogenizationTheorem}) which applies to functionals of type \eqref{IntegralFunct} when the integrand is singular (see Corollary \ref{corollary1}). A typical example of a such singular integrand is given by $W:\RR^N\times\MM^{N\times N}\to[0,+\infty]$ of the form
\begin{equation}\label{TypicalIntegrand}
W(x,\xi)=
 |\xi|^p+a(x)h(\det \xi)
\end{equation}
where $0<\eta\leq a\in L^\infty(\RR^N)\cap C(\RR^N)$ is a $1$-periodic function  and $h:\RR\to[0,+\infty]$ is a measurable function for which there exist $\gamma,\delta>0$ such that $h(t)\leq\delta$ for all $|t|\geq\gamma$. For example, given $s>0$ and $T\geq0$ (possibly very large), this latter condition is satisfied with $\gamma=2T$ and $\delta=\max\{{1\over (2T)^s},T\}$ when $h$ is of type  
\begin{equation}\label{TypeOfH}
h(t)=\left\{
\begin{array}{cl}
T&\hbox{if }t<-T\\
+\infty&\hbox{if }t\in[-T,0]\\
\displaystyle{1\over t^s}&\hbox{if }t>0.
\end{array}
\right.
\end{equation}
Note that $W$ as in \eqref{TypicalIntegrand} with $h$ given by \eqref{TypeOfH} is compatible with the singular behavior  $W(x,\xi)\to+\infty$ as $\det \xi\to 0$ (however, such a $W$ is not consistent with the noninterpenetration of the matter). 

An outline of the paper is as follows. Our homogenization theorem (see Theorem \ref{HomogenizationTheorem}) is stated and proved in \S 3. Its proof uses a relaxation theorem (see Theorem \ref{GeneralizedRelaxationTheorem}), whose statement and proof are given in \S 2, and Braides's homogenization theorem (see Theorem \ref{BraidesTheorem}).  Homogenization of functionals of type \eqref{IntegralFunct} when $W$ is of the form \eqref{TypicalIntegrand}-\eqref{TypeOfH} is treated in \S 4 as an application of Theorem \ref{HomogenizationTheorem} (see Corollary \ref{corollary1}).


\section{Relaxation theorem}

Let $m,N\geq 1$ be two integers. Given any bounded open set $D\subset\RR^N$ with $|\partial D|=0$, we denote  the space of continuous piecewise affine functions from $D$ to $\RR^m$ by $\Aff(D;\RR^m)$, i.e., $\varphi\in\Aff(D;\RR^m)$ if and only if $\varphi$ is continuous and there exists a finite family $\{D_i\}_{i\in I}$ of open disjoint subsets of $D$ such that $|\partial D_i|=0$ for all $i\in I$, $|D\setminus \cup_{i\in I} D_i|=0$ and for every $i\in I$, $\nabla \varphi\equiv \xi_i$ in $D_i$ with $\xi_i\in\MM^{m\times N}$, and we set $\Aff_0(D;\RR^m):=\{\varphi\in\Aff(D;\RR^m):\varphi=0\hbox{ on }\partial D\}$ and $W^{1,\infty}_0(D;\RR^m):=\{\varphi\in W^{1,\infty}(D;\RR^m):\varphi=0\hbox{ on }\partial D\}$. Given a normal integrand $f:\RR^N\times\MM^{m\times N}\to[0,+\infty]$, where $\MM^{m\times N}$ denotes the space of real $m\times N$ matrices, we consider the normal integrand $\Z f:\RR^N\times\MM^{m\times N}\to[0,+\infty]$ defined by
\[
\Z f(x,\xi):=\inf\left\{\int_Y f(x,\xi+\nabla\varphi(y))dy:\varphi\in W^{1,\infty}_0(Y;\RR^m)\right\}
\]
with $Y:=]0,1[^N$. The following result is due to Fonseca (see \cite[lemma 2.16, Theorem 2.17 and Proposition 2.3]{fonseca88}).
\begin{proposition}\label{FonsecaProperties}
The function $\Z f$ satisfies the following properties.
\begin{itemize}
\item[{\rm(a)}] For every bounded open set $D\subset\RR^N$ with $|\partial D|=0$ and every $(x,\xi)\in\RR^N\times\MM^{m\times N}$,
\[
\Z f(x,\xi)=\inf\left\{{1\over |D|}\int_D f(x,\xi+\nabla\varphi(y))dy:\varphi\in W^{1,\infty}_0(D;\RR^m)\right\}.
\]
\item[{\rm(b)}] For every $x\in\RR^N$, if $\Z f(x,\cdot)$ is finite then $\Z f(x,\cdot)$ is rank-one convex, i.e.,  for every $\xi,\xi^\prime\in\MM^{m\times N}$ with $\rank(\xi-\xi^\prime)\leq 1$,
\[
\Z f(x,\lambda\xi+(1-\lambda)\xi^\prime)\leq\lambda\Z f(x,\xi)+(1-\lambda)\Z f(x,\xi^\prime).
\] 
\item[{\rm(c)}] For every $x\in\RR^N$, if $\Z f(x,\cdot)$ is finite then $\Z f(x,\cdot)$ is  continuous, i.e., $\Z f$ is a Carath\'eodory integrand\footnote{A function $f:\RR^N\times\MM^{m\times N}\to[0,+\infty]$ is called a Carath\'eodory integrand if $f(x,\xi)$ is measurable in $x$ and continuous in $\xi$.} whenever $\Z f$ is finite.
\item[{\rm(d)}] For every bounded open set $D\subset\RR^N$ with $|\partial D|=0$, every $(x,\xi)\in\RR^N\times\MM^{m\times N}$ and every $\varphi\in\Aff_0(D;\RR^m)$,
$$
\Z f(x,\xi)\leq{1\over |D|}\int_D \Z f(x,\xi+\nabla\varphi(y))dy.
$$
\end{itemize}
\end{proposition}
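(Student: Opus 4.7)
The plan is to prove (a) first, since it is the scaling invariance that underpins (b), (c), and (d); the overall scheme follows Fonseca's framework. To show ``$\leq$'' in (a), I would fix $\psi\in W^{1,\infty}_0(Y;\RR^m)$ nearly optimal in the definition of $\Z f(x,\xi)$ and apply the Vitali covering theorem to cover $D$, up to a Lebesgue null set, by a disjoint countable family of scaled translated copies $a_j+r_jY\subset D$. Setting $\tilde\psi(y):=r_j\psi\bigl((y-a_j)/r_j\bigr)$ on each cell and zero elsewhere yields an admissible $\tilde\psi\in W^{1,\infty}_0(D;\RR^m)$, and the affine change of variables $z=(y-a_j)/r_j$ gives
\[
\frac{1}{|D|}\int_D f(x,\xi+\nabla\tilde\psi(y))dy=\int_Y f(x,\xi+\nabla\psi(z))dz.
\]
The reverse inequality is obtained symmetrically, covering $Y$ by scaled copies of $D$.

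Property (d) then follows by a direct patching argument: given $\varphi\in\Aff_0(D;\RR^m)$ with pieces $\{D_i\}$ on which $\nabla\varphi\equiv\xi_i$, fix $\eps>0$ and use (a) on each $D_i$ to pick $\psi_i\in W^{1,\infty}_0(D_i;\RR^m)$ with $\int_{D_i}f(x,\xi+\xi_i+\nabla\psi_i)\leq|D_i|(\Z f(x,\xi+\xi_i)+\eps)$. The function $\tilde\psi:=\varphi+\sum_i\psi_i\chi_{D_i}$ lies in $W^{1,\infty}_0(D;\RR^m)$ because each $\psi_i$ vanishes on $\partial D_i$ while $\varphi$ vanishes on $\partial D$, so applying (a) to $D$ yields $|D|\,\Z f(x,\xi)\leq\int_D\Z f(x,\xi+\nabla\varphi)+\eps|D|$, and (d) follows on letting $\eps\to 0$. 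For (b) with $\Z f(x,\cdot)$ finite and $\xi-\xi'=a\otimes n$, I would build a one-dimensional laminate in direction $n$: a piecewise affine map whose gradient equals $\xi$ on strips of relative measure $\lambda$ and $\xi'$ on strips of measure $1-\lambda$, with boundary trace $(\lambda\xi+(1-\lambda)\xi')y$, and then insert near-optimal $W^{1,\infty}_0$ perturbations inside each strip via (a) and average. Finally, (c) is the classical fact that a finite rank-one convex function on $\MM^{m\times N}$ is separately convex in each entry, hence locally bounded from above, hence locally Lipschitz, and therefore continuous; measurability in $x$ is inherited from $f$.

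The main technical obstacle is the gluing step in (b): one must match the boundary trace of the outer lamination with the interior perturbations while controlling the contribution of the thin transition zones near the strip interfaces, and it is precisely the domain-independence from (a) that makes this possible. The constructions in (a), (b), and (d) are variations on a common ``cover-and-paste'' theme, so once (a) is secured the remaining properties fall out with only bookkeeping, and (c) reduces to a purely convex-analytic fact.
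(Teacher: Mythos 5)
The paper itself does not prove Proposition~\ref{FonsecaProperties}: it attributes all four statements to Fonseca~\cite{fonseca88} (Lemma~2.16, Theorem~2.17, Proposition~2.3), so there is no internal proof to compare against; what can be assessed is whether your sketch is a correct reconstruction.

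Parts (a), (d) and (c) are essentially right. For (a) the Vitali covering and the affine change of variables are the standard route; the glued function $\tilde\psi$ lands in $W^{1,\infty}_0(D;\RR^m)$ because the rescaled pieces have a uniform Lipschitz bound and vanish on each $\partial(a_j+r_jY)$, hence match continuously, and the assumption $|\partial D|=0$ guarantees the covering misses only a null set. Your patching argument for (d) is also correct: $\varphi+\psi_i$ is Lipschitz on each $D_i$, continuity across $\partial D_i\cap D$ holds because $\psi_i$ vanishes there, $\tilde\psi:=\varphi+\sum_i\psi_i\chi_{D_i}$ vanishes on $\partial D$, and feeding $\tilde\psi$ into (a) on $D$ gives exactly the inequality after $\eps\to 0$. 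For (c), once (b) is available the argument you invoke is the standard one (rank-one convex $\Rightarrow$ convex in each matrix entry $\Rightarrow$ bounded above on boxes by its vertex values $\Rightarrow$ locally Lipschitz); measurability of $\Z f(\cdot,\xi)$ does deserve a sentence — one reduces the infimum over $W^{1,\infty}_0(Y;\RR^m)$ to a countable subfamily, using that $f$ is a normal integrand — which you wave off but is routine.

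The genuine gap is in (b). A piecewise affine map on a bounded $D$ with gradient only in $\{\xi,\xi'\}$ and boundary trace $\bar\xi\, y$, where $\bar\xi=\lambda\xi+(1-\lambda)\xi'$, does not exist: the sawtooth profile is constant in the directions orthogonal to the lamination normal $n$ and is therefore nonzero on the faces of $D$ parallel to $n$. (This — not the ``strip interfaces'', where the map is continuous and piecewise affine — is the obstruction.) You must cut off, and the cutoff region carries gradients off the rank-one segment $[\xi',\xi]$ on a set of small but positive measure. Passing the cut-off laminate through (d) yields only
\[
\Z f(x,\bar\xi)\le\lambda\Z f(x,\xi)+(1-\lambda)\Z f(x,\xi')+\hbox{(boundary-layer term)},
\]
and to kill the last term you need $\Z f(x,\cdot)$ to be bounded on a neighbourhood of the segment. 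Finiteness of $\Z f(x,\cdot)$ alone does not give local boundedness; that is normally deduced from rank-one convexity, which is what you are in the middle of proving, so as written the argument is circular. Invoking (a) does not repair this: (a) lets you change domains and scales, but it gives no bound on $\Z f$ over the extra gradients created by the cutoff. The standard fix — the one behind Fonseca's Lemma~2.16 and its $\hat\Z$ analogue in~\cite{oah-jpm09} — is to construct the laminate in $\Aff_0$ with the cutoff gradients controlled explicitly, so that the boundary-layer term is estimated using only the test functions certifying finiteness of $\Z f(x,\xi)$ and $\Z f(x,\xi')$, not an a priori local bound. You correctly flag the boundary-matching issue as the technical obstacle, but your sketch does not supply that step, and it is precisely the nontrivial content of (b).
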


\begin{remark}\label{FonsecaPropertiesRemark}
Proposition \ref{FonsecaProperties} is also valid with ``$\hat \Z f$" instead of ``$\Z f$" (see \cite[Proposition 2.3]{oah-jpm09}) where  $\hat\Z f:\RR^N\times\MM^{m\times N}\to[0,+\infty]$ is given by
\[
\hat\Z f(x,\xi):=\inf\left\{\int_Y f(x,\xi+\nabla\varphi(y))dy:\varphi\in\Aff_0(Y;\RR^m)\right\}.
\]
In particular, Proposition \ref{FonsecaProperties}(d)  can be rewritten as $\hat\Z[\Z f]=\Z f$. \end{remark}

Given $x\in\RR^N$ we say that $f(x,\cdot)$ is quasiconvex (in the sense of Morrey \cite{morrey52}) if for every $\xi\in\MM^{m\times N}$, every bounded open set $D\subset\RR^N$ with $|\partial D|=0$ and every $\varphi\in W^{1,\infty}_0(D;\RR^m)$,
$$
f(x,\xi)\leq{1\over|D|}\int_D f(x,\xi+\nabla\varphi(y))dy.
$$ 
By the quasiconvex envelope of $f(x,\cdot)$, that we denote by $\mathcal{Q}f(x,\cdot)$, we mean the greatest quasiconvex function which less than or equal to $f(x,\cdot)$. (clearly, $f(x,\cdot)$ is quasiconvex if and only if $\mathcal{Q}f(x,\cdot)=f(x,\cdot)$.) The concept of quasiconvex envelope was introduced by Dacorogna (see \cite{dacorogna82}) who proved the following theorem (see \cite[Theorem 6.9 p. 271]{dacorogna08}).
\begin{theorem}\label{DacorognaTheorem}
If $f$ is finite then $\mathcal{Q}f=\hat \Z f=\Z f$.
\end{theorem}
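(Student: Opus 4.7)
The plan is to prove the cyclic chain $\mathcal{Q}f \leq \Z f \leq \hat\Z f \leq \mathcal{Q}f$, from which all three quantities must coincide.

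Two of these inequalities are easy. The inclusion $\Aff_0(Y;\RR^m) \subset W^{1,\infty}_0(Y;\RR^m)$ makes the infimum defining $\Z f$ run over a larger class than the one defining $\hat\Z f$, so $\Z f \leq \hat\Z f$. For $\mathcal{Q}f \leq \Z f$, I would combine the quasiconvexity of $\mathcal{Q}f$ with $\mathcal{Q}f \leq f$ to obtain, for every $\varphi \in W^{1,\infty}_0(Y;\RR^m)$,
\[
\mathcal{Q}f(x,\xi) \leq \int_Y \mathcal{Q}f(x,\xi+\nabla\varphi(y))\,dy \leq \int_Y f(x,\xi+\nabla\varphi(y))\,dy,
\]
and then take the infimum over $\varphi$.

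The crux is the remaining inequality $\hat\Z f \leq \mathcal{Q}f$, which I would deduce by showing that $\hat\Z f(x,\cdot)$ is itself quasiconvex. Once this is known, since $\hat\Z f \leq f$ (choose $\varphi\equiv 0$ in the definition) and $\mathcal{Q}f$ is by definition the greatest quasiconvex minorant of $f$, the inequality $\hat\Z f \leq \mathcal{Q}f$ is immediate. The starting point for the quasiconvexity of $\hat\Z f(x,\cdot)$ is Remark \ref{FonsecaPropertiesRemark}, i.e.\ the $\hat\Z$-analogue of Proposition \ref{FonsecaProperties}(d), which already provides
\[
\hat\Z f(x,\xi) \leq \frac{1}{|D|}\int_D \hat\Z f(x,\xi+\nabla\varphi(y))\,dy
\]
for every $\varphi \in \Aff_0(D;\RR^m)$; what remains is to upgrade this inequality to all $\varphi \in W^{1,\infty}_0(D;\RR^m)$.

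This extension is where the finiteness hypothesis on $f$ is used. Given $\varphi \in W^{1,\infty}_0(D;\RR^m)$, I would invoke the classical approximation by continuous piecewise affine maps to produce a sequence $\varphi_n \in \Aff_0(D;\RR^m)$ with $\|\nabla\varphi_n\|_\infty$ bounded uniformly in $n$ and $\nabla\varphi_n \to \nabla\varphi$ almost everywhere on $D$. Because $f$, and hence $\hat\Z f$, are finite, the $\hat\Z$-version of Proposition \ref{FonsecaProperties}(c) supplied by Remark \ref{FonsecaPropertiesRemark} grants continuity of $\hat\Z f(x,\cdot)$, and therefore boundedness on the compact set that contains all the values $\xi+\nabla\varphi_n(y)$ for $n\in\NN$ and $y\in D$. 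Lebesgue's dominated convergence theorem then permits the passage to the limit $n\to\infty$ in the previous display, which yields the quasiconvexity of $\hat\Z f(x,\cdot)$ and closes the chain. The principal obstacle is the simultaneous density and gradient-bound step: producing $\varphi_n\in\Aff_0$ that approximate a given $W^{1,\infty}_0$ map without blowing up the Lipschitz constant. This is classical (triangulation and nodal interpolation) but is the only non-routine ingredient; everything else reduces to continuity plus dominated convergence.
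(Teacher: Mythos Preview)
The paper does not supply its own proof of this theorem: it is quoted as a result of Dacorogna with a reference to \cite{dacorogna82} and \cite[Theorem 6.9]{dacorogna08}, and is then used as a black box in the proof of Theorem~\ref{DacorognaTheorem}-bis. So there is nothing in the paper to compare your argument against.

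That said, your outline is essentially the classical proof one finds in Dacorogna's book. The cyclic chain $\mathcal{Q}f \leq \Z f \leq \hat\Z f \leq \mathcal{Q}f$ is the standard route, and your identification of the density-with-gradient-bound step as the only substantive ingredient is accurate. Two small comments. First, in the inequality $\mathcal{Q}f \leq \Z f$ you tacitly use that $\mathcal{Q}f(x,\cdot)$ is itself quasiconvex; this is true because the pointwise supremum of a family of quasiconvex functions is quasiconvex, but it deserves a sentence. Second, the piecewise-affine approximation of $\varphi \in W^{1,\infty}_0(D;\RR^m)$ by $\varphi_n \in \Aff_0(D;\RR^m)$ with controlled Lipschitz constant is straightforward on a cube (or any polytope) via triangulation and nodal interpolation, but is less immediate on an arbitrary bounded open set $D$ with $|\partial D|=0$. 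The cleanest fix is to prove the quasiconvexity inequality for $\hat\Z f(x,\cdot)$ on $Y$ only, and then observe that this implies the inequality on every such $D$: extend $\varphi\in W^{1,\infty}_0(D;\RR^m)$ by zero to a cube $Q\supset D$, apply the inequality on $Q$, and subtract the contribution of $Q\setminus D$. With these two points made explicit, your argument is complete.
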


The following result  is a slight generalization of Theorem \ref{DacorognaTheorem}.

\newtheorem*{DacorognaTheorem}{\bf Theorem \ref{DacorognaTheorem}-bis}
\begin{DacorognaTheorem}
{\em If $\Z f$ is finite then $\mathcal{Q}f=\Z f$. In particular, $\Z f(x,\cdot)$ is quasiconvex for all $x\in\RR^N$.}
\end{DacorognaTheorem}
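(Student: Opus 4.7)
My plan is to prove the two inequalities $\Z f\le \mathcal{Q}f$ and $\mathcal{Q}f\le \Z f$ separately, with the serious work concentrated in establishing the quasiconvexity of $\Z f$.

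First, the easy direction $\mathcal{Q}f\le \Z f$. Taking $\varphi\equiv 0$ in the definition shows $\Z f\le f$, hence $\mathcal{Q}f\le f$. Since $\mathcal{Q}f$ is quasiconvex by definition, for every $\xi\in\MM^{m\times N}$ and every $\varphi\in W^{1,\infty}_0(Y;\RR^m)$,
\[
\mathcal{Q}f(x,\xi)\le \int_Y \mathcal{Q}f(x,\xi+\nabla\varphi(y))dy\le \int_Y f(x,\xi+\nabla\varphi(y))dy.
\]
Taking the infimum over $\varphi$ gives $\mathcal{Q}f(x,\xi)\le \Z f(x,\xi)$.

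For the reverse inequality, I will show that $\Z f(x,\cdot)$ is itself quasiconvex; since $\Z f\le f$ and $\mathcal{Q}f$ is the largest quasiconvex function $\le f$, this gives $\Z f\le \mathcal{Q}f$ and completes the proof. The key tools are Proposition \ref{FonsecaProperties}(c), which yields continuity of $\Z f(x,\cdot)$ under the hypothesis that $\Z f$ is finite, and Proposition \ref{FonsecaProperties}(d), which is exactly the quasiconvexity inequality restricted to test functions $\varphi\in \Aff_0(D;\RR^m)$. So the task reduces to passing from piecewise affine to general Lipschitz test functions. Given $\varphi\in W^{1,\infty}_0(D;\RR^m)$, invoke the classical density result providing $\varphi_n\in \Aff_0(D;\RR^m)$ with $\varphi_n\to\varphi$ uniformly, $\nabla\varphi_n\to\nabla\varphi$ a.e., and $\sup_n\|\nabla\varphi_n\|_{L^\infty}<+\infty$. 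Applying (d) to each $\varphi_n$ gives
\[
\Z f(x,\xi)\le \frac{1}{|D|}\int_D \Z f(x,\xi+\nabla\varphi_n(y))dy.
\]
Because the values $\xi+\nabla\varphi_n(y)$ remain in a fixed compact subset of $\MM^{m\times N}$ and $\Z f(x,\cdot)$ is continuous there (hence bounded), dominated convergence lets me pass to the limit, yielding the quasiconvexity inequality for the original $\varphi$.

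The main obstacle is precisely this passage from $\Aff_0$ to $W^{1,\infty}_0$ test functions. The delicate point is that the assumption gives only finiteness and continuity of $\Z f(x,\cdot)$, not any growth bound from above, so the uniform $L^\infty$ bound on $\nabla\varphi_n$ is essential in order to confine the arguments of $\Z f(x,\cdot)$ to a compact set and trigger dominated convergence; no weaker mode of approximation would suffice. The ``in particular'' clause is then automatic, since the identification $\Z f=\mathcal{Q}f$ together with the quasiconvexity of the envelope $\mathcal{Q}f$ gives quasiconvexity of $\Z f(x,\cdot)$ for every $x\in\RR^N$.
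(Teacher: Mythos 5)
Your proof is correct, but it takes a genuinely more hands-on route than the paper does. The paper argues by pure reduction: it applies Theorem~\ref{DacorognaTheorem} to the finite function $\Z f$ to get $\mathcal{Q}[\Z f]=\hat\Z[\Z f]$, then invokes Remark~\ref{FonsecaPropertiesRemark} (the reformulation of Proposition~\ref{FonsecaProperties}(d) as $\hat\Z[\Z f]=\Z f$) to conclude $\mathcal{Q}[\Z f]=\Z f$, so $\Z f(x,\cdot)$ is quasiconvex; the identity $\mathcal{Q}f=\Z f$ then follows exactly as in your last step, from $\mathcal{Q}f\le\Z f\le f$ and maximality of the quasiconvex envelope. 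You instead re-derive the quasiconvexity of $\Z f$ directly from Fonseca's properties: (d) gives the quasiconvexity inequality against $\Aff_0$ test functions, (c) gives continuity of $\Z f(x,\cdot)$, and a piecewise-affine approximation with uniform gradient bound plus dominated convergence upgrades the inequality to all of $W^{1,\infty}_0$. In effect you are re-proving, from scratch, the piece of Dacorogna's theorem that the paper black-boxes. What the paper's route buys is brevity; what your route buys is transparency about exactly which ingredients (continuity from (c), stability from (d), and the approximation lemma) are doing the work. One small technical caution: the density result you invoke --- $\Aff_0(D;\RR^m)\ni\varphi_n\to\varphi\in W^{1,\infty}_0(D;\RR^m)$ with $\nabla\varphi_n\to\nabla\varphi$ a.e.\ (say along a subsequence) and $\sup_n\|\nabla\varphi_n\|_{L^\infty}<+\infty$ --- is standard for nice domains such as the cube $Y$, but is not obviously available for an arbitrary bounded open $D$ with $|\partial D|=0$. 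This is harmless because quasiconvexity only needs to be tested on a single such domain, so you should simply run the argument on $D=Y$; it is worth stating this explicitly.
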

\begin{proof}
As $\Z f$ is finite we have $\mathcal{Q}[\Z f]=\hat\Z[\Z f]$ by Theorem \ref{DacorognaTheorem}. But $\hat\Z[\Z f]=\Z f$ from Remark \ref{FonsecaPropertiesRemark} and so $\mathcal{Q}[\Z f]=\Z f$, i.e., $\Z f(x,\cdot)$ is quasiconvex for all $x\in\RR^N$. As $\Z f\geq\mathcal{Q} f$ it follows that $\mathcal{Q}f=\Z f$.
 \end{proof}

\begin{remark}
Theorem \ref{DacorognaTheorem} can be also generalized as follows: if $\hat\Z f$ is finite then $\mathcal{Q} f=\hat\Z f=\Z f$ (see \cite[Corollaire 2.17]{oah-jpm09}).
\end{remark}

From now on we fix $p\geq 1$. Given $U\subset\RR^N$ be a bounded open set with $|\partial U|=0$ we define $F:W^{1,p}(U;\RR^m)\to[0,+\infty]$ by
\[
F(\phi):=\int_U f(x,\nabla\phi(x))dx
\]
and we consider the relaxed functionals $\overline{F},\overline{F}_0:W^{1,p}(U;\RR^m)\to[0,+\infty]$ given by: 
\begin{itemize}
\item[$\diamond$] $\overline{F}(\phi):=\inf\left\{\liminf\limits_{n\to+\infty}F(\phi_n):\phi_n\to\phi\hbox{ in }L^p(U;\RR^m)\right\}$;
\item[$\diamond$] $\overline{F}_0(\phi):=\inf\left\{\liminf\limits_{n\to+\infty}F(\phi_n):W^{1,p}_0(U;\RR^m)\ni\phi_n\to\phi\hbox{ in }L^p(U;\RR^m)\right\}$
\end{itemize}
with $W^{1,p}_0(U;\RR^m):=\{\phi\in W^{1,p}(U;\RR^m):\phi=0\hbox{ on }\partial U\}$. As $\overline{F}$ and $\overline{F}_0$ are not given by explicit formulas, it is of interest to know under which conditions on $f$ we have:
\begin{eqnarray}
&&\displaystyle\overline{F}(\phi)=\int_U\overline{f}(x,\nabla\phi(x))dx\hbox{ for all }\phi\in W^{1,p}(U;\RR^m);\label{IntRep1}\\
&&\overline{F}_0(\phi)=\left\{
\begin{array}{cl}
\overline{F}(\phi)&\hbox{if }\phi\in W^{1,p}_0(U;\RR^m)\\
+\infty&\hbox{otherwise}
\end{array}
\right.\label{IntRep2}
\end{eqnarray}
with $\overline{f}:\RR^N\times\MM^{m\times N}\to[0,+\infty]$ (whose we wish to give a representation formula). In the $p$-polynomial growth case, such integral representation problems was studied by Dacorogna (see \cite[Theorem 5]{dacorogna82}, see also \cite[Theorem 9.1 p. 416]{dacorogna08}) and Acerbi and Fusco (see \cite[Statement III.7]{acerbi-fusco84}) who proved the following theorem.
\begin{theorem}\label{Acerbi-Dacorogna-FuscoTheorem}
Let $f:\RR^N\times\MM^{m\times N}\to[0,+\infty]$ be a $p$-coercive Carath\'eodory integrand. If $f$ is of $p$-polynomial growth, i.e., 
\begin{equation}\label{fGrowthCondition}
f(x,\xi)\leq c(1+|\xi|^p)\hbox{ for all }(x,\xi)\in\RR^N\times\MM^{m\times N}\hbox{ and some }c>0,
\end{equation}
then \eqref{IntRep1} and \eqref{IntRep2} hold with $\overline{f}=\Z f=\mathcal{Q} f$. If moreover $f(x,.)$ is quasiconvex for all $x\in\RR^N$ then $\overline{f}=f$.
\end{theorem}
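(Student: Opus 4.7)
The plan is to prove \eqref{IntRep1} by establishing both the lower bound $\overline{F}\geq G$ and the upper bound $\overline{F}\leq G$ for $G(\phi):=\int_U\mathcal{Q}f(x,\nabla\phi(x))dx$, then to deduce \eqref{IntRep2} and the final assertion. Since $f$ has $p$-polynomial growth it is finite, so Theorem \ref{DacorognaTheorem} gives $\mathcal{Q}f=\hat\Z f=\Z f$; hence $\mathcal{Q}f$ is a Carath\'eodory integrand (by Proposition \ref{FonsecaProperties}(c)), quasiconvex in $\xi$ by definition, of $p$-polynomial growth (as $\mathcal{Q}f\leq f$), and $p$-coercive (since $|\xi|^p$ is already quasiconvex and $\mathcal{Q}$ is monotone, so $\mathcal{Q}f(x,\xi)\geq C|\xi|^p$).

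For the lower bound I would invoke the classical Acerbi--Fusco sequential weak lower semicontinuity theorem for quasiconvex integrals of $p$-polynomial growth, which gives that $G$ is sequentially weakly lower semicontinuous on $W^{1,p}(U;\RR^m)$. Given any sequence $\phi_n\to\phi$ in $L^p$ with $\liminf F(\phi_n)<+\infty$, extract a subsequence with $F(\phi_n)\leq M$; the $p$-coercivity of $f$ gives a uniform $W^{1,p}$-bound, so $\phi_n\wto\phi$ in $W^{1,p}(U;\RR^m)$. Combining $\mathcal{Q}f\leq f$ with the weak lower semicontinuity of $G$ then yields
\[
\liminf_{n\to+\infty} F(\phi_n)\geq\liminf_{n\to+\infty} G(\phi_n)\geq G(\phi),
\]
and passing to the infimum over admissible sequences gives $\overline{F}\geq G$.

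For the upper bound I construct a recovery sequence, first reducing to piecewise affine data. By density of $\Aff(U;\RR^m)$ in $W^{1,p}(U;\RR^m)$ and strong $W^{1,p}$-continuity of $G$ along such approximations (from the $p$-polynomial growth and dominated convergence), it suffices to recover $\psi\in\Aff(U;\RR^m)$ with $\nabla\psi\equiv\xi_i$ on each piece $D_i$. Fix $h,\eta>0$, cover each $D_i$ up to a null set by disjoint cubes $Q^h_\alpha\subset D_i$ of side $h$ and centers $x_\alpha$, and use Proposition \ref{FonsecaProperties}(a) to pick $\varphi^h_\alpha\in W^{1,\infty}_0(Q^h_\alpha;\RR^m)$ with
\[
\frac{1}{|Q^h_\alpha|}\int_{Q^h_\alpha}f(x_\alpha,\xi_i+\nabla\varphi^h_\alpha(y))dy\leq\mathcal{Q}f(x_\alpha,\xi_i)+\eta.
\]
Extending each $\varphi^h_\alpha$ by zero and setting $\psi_h:=\psi+\sum_{i,\alpha}\varphi^h_\alpha$, the corrections vanish on all cube boundaries so $\psi_h\in W^{1,p}(U;\RR^m)$, and $\psi_h\to\psi$ in $L^p(U;\RR^m)$ by a Poincar\'e estimate on each cube. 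To replace $f(x,\cdot)$ by $f(x_\alpha,\cdot)$ on $Q^h_\alpha$ I would invoke Scorza--Dragoni: restrict to a closed $K\subset U$ with $|U\setminus K|<\delta$ on which $f$ is jointly continuous, use uniform continuity on $K$, and control the integral over $U\setminus K$ by the $p$-polynomial growth together with equi-integrability of $|\nabla\psi_h|^p$. Diagonalizing $h,\eta,\delta\to 0$ then yields $\overline{F}(\psi)\leq G(\psi)$, hence $\overline{F}\leq G$.

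For \eqref{IntRep2}, if $\phi\notin W^{1,p}_0(U;\RR^m)$, any sequence $\phi_n\in W^{1,p}_0$ with $\phi_n\to\phi$ in $L^p$ and bounded energy would converge weakly in $W^{1,p}$ by coercivity; extending by zero to $\RR^N$ would force $\phi\in W^{1,p}_0$, a contradiction, so $\overline{F}_0(\phi)=+\infty$. If $\phi\in W^{1,p}_0(U;\RR^m)$, refine the density step by choosing $\psi_k\in\Aff_0(U;\RR^m)$; as each $\varphi^h_\alpha$ has support in $Q^h_\alpha\subset U$, the recovery sequence lies in $W^{1,p}_0(U;\RR^m)$, giving $\overline{F}_0\leq\overline{F}$ on $W^{1,p}_0$ (the reverse inclusion being trivial). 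The last assertion is immediate: quasiconvexity of $f(x,\cdot)$ forces $\mathcal{Q}f=f$, hence $\overline{f}=f$. I expect the main obstacle to be the upper-bound construction, specifically the Scorza--Dragoni decoupling of $x$ and $\xi$ on the fine cubes while absorbing the error on the bad set purely through the $p$-polynomial growth, together with the correct order of diagonalization of $h$, $\eta$, $\delta$, and the affine-approximation error.
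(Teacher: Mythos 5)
The paper does not give a proof of this theorem; it is quoted as a known result, attributed to Dacorogna \cite{dacorogna82,dacorogna08} and Acerbi--Fusco \cite{acerbi-fusco84}, and then used as a black box in the proof of Theorem \ref{GeneralizedRelaxationTheorem}. So there is no in-paper argument for you to be compared against, and your attempt has to be judged as a reconstruction of the classical result.

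Your reconstruction follows the standard two-sided strategy and is sound in outline. The lower bound via the Acerbi--Fusco weak lower semicontinuity theorem for quasiconvex Carath\'eodory integrands of $p$-growth, with coercivity used to pass from $L^p$-convergence to weak $W^{1,p}$-convergence, is the right argument (note that the weak-compactness step as you wrote it requires $p>1$; for $p=1$ one needs a separate BV-type argument, which the cited sources treat separately or exclude). The upper bound via reduction to $\Aff(U;\RR^m)$, covering by frozen cubes, and using Proposition \ref{FonsecaProperties}(a) together with Theorem \ref{DacorognaTheorem} to produce test functions $\varphi^h_\alpha$ is precisely the classical construction; it is also, structurally, a close cousin of the paper's own proof of Lemma \ref{KeyLemma}, which does the same cube-freezing but replaces the Scorza--Dragoni continuity with condition (C$_1$). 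The reduction from $W^{1,p}$ to $\Aff$ via $L^p$-lower semicontinuity of $\overline F$ and strong $W^{1,p}$-continuity of $G$ under $p$-growth is correctly set up, and the passage from \eqref{IntRep1} to \eqref{IntRep2} by forcing the recovery sequence to vanish on $\partial U$ and using weak closedness of $W^{1,p}_0(U;\RR^m)$ is the standard argument.

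The one place your sketch is genuinely incomplete is the Scorza--Dragoni decoupling on the fine cubes, which you yourself flag. The difficulty is real: uniform continuity of $f$ on $K\times\MM^{m\times N}$ fails because $\MM^{m\times N}$ is unbounded, so you must truncate to $\{|\zeta|\leq M\}$, where uniform continuity on $K\times\{|\zeta|\leq M\}$ is available, and then control the contribution of $\{|\nabla\psi_h|>M\}$ by the $p$-growth together with an equi-integrability bound on $|\nabla\psi_h|^p$ uniform in $h$ (which in turn comes from the $p$-coercivity plus \eqref{ZfAssumptioN}-type energy bounds). The order of limits $h\to 0$, then $\delta\to 0$ (the bad set), then $M\to\infty$, then $\eta\to 0$, then the affine-approximation error, has to be fixed and checked, but this is exactly the bookkeeping done in \cite{acerbi-fusco84} and \cite{dacorogna08}, and nothing in your outline is in conflict with it.
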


Because of the assumption \eqref{fGrowthCondition} Acerbi-Dacorogna-Fusco's relaxation theorem cannot handle integrands having a singular behavior of type $f(x,\xi)\to+\infty$ as $\det\xi\to 0$ (when $m=N$). However, by using Theorem \ref{Acerbi-Dacorogna-FuscoTheorem} and a key lemma (see Lemma \ref{KeyLemma}) we can go beyond the $p$-polynomial growth case (see Theorem \ref{GeneralizedRelaxationTheorem}  and Corollary \ref{corollary2}).

Let $\mathcal{A}^p$ be the class of $p$-coercive normal integrands $f:\RR^N\times\MM^{m\times N}\to[0,+\infty]$ satisfying the following two conditions:
\begin{itemize}
\item[(C$_1$)] there exists a function $\omega:[0,+\infty[\to[0,+\infty[$ continuous at the origin with $\omega(0)=0$ such that for every $x_1,x_2\in\RR^N$ and every $\xi\in \MM^{m\times N}$,
\[
f(x_1,\xi)\leq \omega(|x_1-x_2|)(1+f(x_2,\xi))+f(x_2,\xi);
\]
\item[(C$_2$)] the function $\Z f$ is of $p$-polynomial growth, i.e., $\Z f(x,\xi)\leq c(1+|\xi|^p)$ for all $(x,\xi)\in\RR^N\times\MM^{m\times N}$ and some $c>0$.
\end{itemize}
\begin{remark}\label{ElementaryRemark}
\begin{itemize}
\item[(i)] Condition (C$_1$) is a condition of Serrin type (see \cite{serrin61} or \cite[p. 96-97]{morrey66}, see also \cite{marcellini86}).

\item[(ii)] If $f$ satisfies (C$_1$) then ${\dom}f(x_1,\cdot)={\dom}f(x_2,\cdot)$ for all $x_1,x_2\in\RR^N$, where, for $x\in\RR^N$, ${\dom}f(x,\cdot):=\{\xi\in\MM^{m\times N}:f(x,\xi)<+\infty\}$.

\item[(iii)] If $f$ satisfies (C$_1$) then $f(\cdot,\xi)$ is continuous for all $\xi\in\MM^{m\times N}$.

\item[(iv)] If $f$ satisfies {\rm(C$_1$)} then  for every $x_1,x_2\in\RR^N$ and every $\xi\in \MM^{m\times N}$, 
\[
\Z f(x_1,\xi)\leq \omega(|x_1-x_2|)(1+\Z f(x_2,\xi))+\Z f(x_2,\xi).
\]
In particular, we have  $\Z f(\cdot,\xi)\in C(\overline{V})$ for all $\xi\in\MM^{m\times N}$ whenever $f$ satisfies (C$_1$), $\Z f$ is finite (for example when (C$_2$) holds) and $V\subset\RR^N$ is a bounded open set. (This fact will be used in the proof of Lemma \ref{KeyLemma}.) 
\end{itemize}
\end{remark}

The following theorem is a slight generalization of \cite[Theorem 2]{oah-jpm07} (see also \cite[Theorem 1.4]{oah-jpm08a} and \cite[Th\'eor\`eme 2.1]{oah-jpm09}).
\begin{theorem}\label{GeneralizedRelaxationTheorem}
If $f\in\mathcal{A}^p$ then \eqref{IntRep1} and \eqref{IntRep2} hold with $\overline{f}=\Z f=\mathcal{Q} f$. As a consequence, we have
\[
\inf_{\phi\in W^{1,p}_0(U;\RR^m)}\int_U f(x,\nabla\phi(x))dx=\inf_{\phi\in W^{1,p}_0(U;\RR^m)}\int_U \Z f(x,\nabla\phi(x))dx.
\]
\end{theorem}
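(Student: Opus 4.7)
The plan is to relax against the auxiliary integrand $\Z f$ rather than $f$, apply Theorem \ref{Acerbi-Dacorogna-FuscoTheorem} to $\Z f$, and bridge between the $f$-energy and the $\Z f$-energy via Lemma \ref{KeyLemma}. First I would verify that $\Z f$ meets the hypotheses of Theorem \ref{Acerbi-Dacorogna-FuscoTheorem}: finiteness of $\Z f$ by (C$_2$) allows Theorem \ref{DacorognaTheorem}-bis to give $\Z f=\mathcal{Q}f$, hence $\Z f(x,\cdot)$ is quasiconvex for every $x$; Proposition \ref{FonsecaProperties}(c) promotes $\Z f$ to a Carath\'eodory integrand; (C$_2$) provides the $p$-polynomial growth; and $p$-coercivity transfers from $f$ to $\Z f$ through Jensen's inequality applied inside the infimum defining $\Z f$.

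Set $G(\phi):=\int_U\Z f(x,\nabla\phi(x))dx$. Since $\Z f$ coincides with its own quasiconvex envelope, Theorem \ref{Acerbi-Dacorogna-FuscoTheorem} applied to $\Z f$ yields $\overline G=G$ on $W^{1,p}(U;\RR^m)$ and $\overline G_0=G$ on $W^{1,p}_0(U;\RR^m)$, the latter being $+\infty$ elsewhere (by $p$-coercivity and weak closedness of $W^{1,p}_0$ in $W^{1,p}$). The inequality $\Z f\leq f$ then gives $\overline G\leq\overline F$ and $\overline G_0\leq\overline F_0$, furnishing the lower bounds $G\leq\overline F$ on $W^{1,p}(U;\RR^m)$ and $G\leq\overline F_0$ on $W^{1,p}_0(U;\RR^m)$. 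The matching upper bound is precisely what Lemma \ref{KeyLemma} supplies: for every $\phi\in W^{1,p}(U;\RR^m)$ (resp. in $W^{1,p}_0(U;\RR^m)$) it produces a sequence $\phi_n\to\phi$ in $L^p(U;\RR^m)$ (resp. with $\phi_n\in W^{1,p}_0(U;\RR^m)$) such that $\limsup_n F(\phi_n)\leq G(\phi)$. Combining these inequalities establishes \eqref{IntRep1}--\eqref{IntRep2} with $\overline f=\Z f$, and the identification $\overline f=\mathcal{Q}f$ then follows from Theorem \ref{DacorognaTheorem}-bis.

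The consequence on infima is standard: on the one hand $\overline F_0\leq F$ on $W^{1,p}_0(U;\RR^m)$ (from the constant recovery sequence) gives $\inf_{W^{1,p}_0}\overline F_0\leq\inf_{W^{1,p}_0}F$; on the other, applying Lemma \ref{KeyLemma}'s recovery sequence to each element of a minimizing sequence for $G$ inside $W^{1,p}_0$ and diagonalizing yields the reverse inequality. I expect the principal obstacle to be the construction of the recovery sequence encapsulated in Lemma \ref{KeyLemma}. The natural strategy there is to exploit the $L^p$-continuity of $G$ (which follows from the $p$-polynomial growth of $\Z f$) to reduce to $\phi\in\Aff(U;\RR^m)$; then, on each affine piece $D_i$ with constant gradient $\xi_i$, to pick $\varphi^\eta_i\in W^{1,\infty}_0(D_i;\RR^m)$ quasi-minimizing the characterization of $\Z f(x_i,\xi_i)$ given by Proposition \ref{FonsecaProperties}(a) at a base point $x_i\in D_i$; to glue periodic rescaled copies of these at small scale $1/n$ inside $D_i$; and to absorb the $x$-dependence via the Serrin-type condition (C$_1$) together with the uniform continuity provided by Remark \ref{ElementaryRemark}(iv). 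Preservation of the homogeneous boundary datum required for \eqref{IntRep2} follows because the rescaled test functions vanish on their cell boundaries, after a preliminary approximation of $\phi\in W^{1,p}_0(U;\RR^m)$ by $\Aff_0(U;\RR^m)$ functions.
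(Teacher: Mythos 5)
Your proposal is correct and follows essentially the same strategy as the paper: verify that $\Z f$ satisfies the hypotheses of Theorem \ref{Acerbi-Dacorogna-FuscoTheorem} (using (C$_2$), Theorem \ref{DacorognaTheorem}-bis and Proposition \ref{FonsecaProperties}(c)), obtain the lower bound from $\Z f\leq f$, obtain the matching upper bound on $\Aff(U;\RR^m)$ from Lemma \ref{KeyLemma} and extend by density and $p$-polynomial growth, and then combine. The only cosmetic point is that Lemma \ref{KeyLemma} is stated for $\phi\in\Aff(U;\RR^m)$ (resp. $\Aff_0(U;\RR^m)$) rather than for all of $W^{1,p}(U;\RR^m)$ — you do supply the density reduction in your final paragraph, but the intermediate sentence should be phrased accordingly.
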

\begin{proof}
Let $\Z F:W^{1,p}(U;\RR^m)\to[0,+\infty]$ be defined by
\[
\Z F(\phi):=\int_U\Z f(x,\nabla\phi(x))dx
\]
and let $\overline{\Z F},\overline{\Z F}_0:W^{1,p}(U;\RR^m)\to[0,+\infty]$ be given by:
\begin{itemize}
\item[$\diamond$] $\overline{\Z F}(\phi):=\inf\left\{\liminf\limits_{n\to+\infty}\Z F(\phi_n):\phi_n\to\phi\hbox{ in }L^p(U;\RR^m)\right\};$
\item[$\diamond$] $\overline{\Z F}_0(\phi):=\inf\left\{\liminf\limits_{n\to+\infty}\Z F(\phi_n):W^{1,p}_0(U;\RR^m)\ni\phi_n\to\phi\hbox{ in }L^p(U;\RR^m)\right\}.$
\end{itemize}
We need the following lemma whose proof is given below.
\begin{lemma}\label{KeyLemma}
Under {\rm(C$_1$)} and {\rm(C$_2$)} if $\phi\in\Aff(U;\RR^m)$ {\rm(}resp. $\phi\in\Aff_0(U;\RR^m)${\rm)} then
\begin{equation}\label{CoNCLuSIONLeMMaKeY}
\overline{F}(\phi)\leq\int_U \Z f\left(x,\nabla\phi(x)\right)dx\ \hbox{{\rm (}resp. }\overline{F}_0(\phi)\leq\int_U \Z f\left(x,\nabla\phi(x)\right)dx\hbox{\rm)}.
\end{equation}
\end{lemma}
As $\Z f$ is of $p$-polynomial growth and  $\Aff(U;\RR^m)$ (resp. $\Aff_0(U;\RR^m)$) is strongly dense in $W^{1,p}(U;\RR^m)$ (resp. $W^{1,p}_0(U;\RR^m)$), from Lemma \ref{KeyLemma} we deduce that \eqref{CoNCLuSIONLeMMaKeY} holds for all  $\phi\in W^{1,p}(U;\RR^m)$ {\rm(}resp. $\phi\in W^{1,p}_0(U;\RR^m)${\rm)}. Thus $\overline{F}\leq\overline{\Z F}$ (resp. $\overline{F}_0\leq\overline{\Z F}_0$). Moreover,  $\overline{\Z F}\leq\overline{F}$ (resp. $\overline{\Z F}_0\leq\overline{F}_0$), hence
\begin{equation}\label{Eq1ProofRelaxTheorem}
\overline{F}=\overline{\Z F}\hbox{ (resp. }\overline{F}_0=\overline{\Z F}_0\hbox{)}.
\end{equation}
As $f$ is $p$-coercive, also is $\Z f$. Moreover, since $\Z f$ is finite (because (C$_2$) holds), on the one hand, $\Z f$ is a Carath\'eodory integrand by Proposition \ref{FonsecaProperties}(c) and, on the other hand, $\Z f(x,\cdot)$ is quasiconvex for all $x\in\RR^N$ by Theorem \ref{DacorognaTheorem}-bis. From Acerbi-Dacorogna-Fusco's relaxation theorem (see Theorem \ref{Acerbi-Dacorogna-FuscoTheorem}) it follows that 
\[
\overline{\Z F}=\Z F\hbox{ (resp. }\overline{\Z F}_0=\left\{
\begin{array}{cl}
\Z F &\hbox{on }W^{1,p}_0(U;\RR^m)\\
+\infty&\hbox{elsewhere}
\end{array}
\right.
\hbox{)}
\]
which gives the theorem when combined with \eqref{Eq1ProofRelaxTheorem}.
\end{proof}

\begin{proof}[Proof of Lemma {\rm\ref{KeyLemma}}]
By definition, there exists a finite family $\{U_i\}_{i\in I}$ of open disjoint subsets of $U$ such that $|\partial U_i|=0$ for all $i\in I$, $|U\setminus\cup_{i\in I}U_i|=0$ and, for every $i\in I$, $\nabla\phi\equiv\xi_i$ in $U_i$ with $\xi_i\in\MM^{m\times N}$. Thus
\[
\int_U\Z f(x,\nabla\phi(x))dx=\sum_{i\in I}\int_{U_i}\Z f(x,\xi_i)dx.
\]
From Remark \ref{ElementaryRemark}(iv) we see that $\Z f(\cdot,\xi_i)\in C(\overline{U}_i)$ for all $i\in I$. Hence, for each $i\in I$, there exists a finite family $\{U_{i,j}^k\}_{j\in J_{i}^k}$ of disjoint subsets of $U_i$ with $|\partial U^k_{i,j}|=0$ for all $j\in J_i^k$ and $|U_i\setminus\cup_{j\in J_{i}^k}U_{i,j}^k|=0$ such that:
\begin{eqnarray}
&&{\rm diam}(U^k_{i,j})<{1\over k}\hbox{ for all }j\in J^k_i;\label{DiamHypo}\\
&&\lim_{k\to+\infty}\sum_{j\in J_{i}^k}|U_{i,j}^k|\Z f(x_{i,j}^k,\xi_i)=\int_{U_i}\Z f(x,\xi_i)dx,\label{RiemannSum}
\end{eqnarray}
where, for $X\subset\RR^N$, ${\rm diam}(X):=\sup\{|x_1-x_2|:x_1,x_2\in X\}$. Fix any $\delta>0$. Then, there exists $\eta>0$ such that  
\begin{equation}\label{ContiNuiTyoFomega}
\omega(t)<{\delta}\hbox{ for all }|t|<{\eta},
\end{equation}
where the function $\omega:[0,+\infty[\to[0,+\infty[$ (continuous at the origin with $\omega(0)=0$) are given by (C$_1$).  Fix any $k\geq 1$ such that ${1\over k}<\eta$. Given any $i\in I$ and any $j\in J_i^k$ we consider $\varphi_{i,j}\in W^{1,\infty}_0(Y;\RR^m)$ such that
\begin{equation}\label{ZfAssumptioN}
\int_Y f(x_{i,j}^k,\xi_i+\nabla\varphi_{i,j}(y))dy\leq\Z f(x_{i,j}^k,\xi_i)+{\delta\over|U|}.
\end{equation}
By Vitali's covering theorem, there exists a finite or countable family $\{a_{i,j,\ell}+\alpha_{i,j,\ell} Y\}_{\ell\in L_{i,j}}$ of disjoint subsets of $U_{i,j}^k$, where $a_{i,j,\ell}\in\RR^N$ and $0<\alpha_{i,j,\ell}<{1\over n}$, such that $|U_{i,j}^k\setminus\cup_{\ell\in L_{i,j}}(a_{i,j,\ell}+\alpha_{i,j,\ell} Y)|=0$ (and so $\sum_{\ell\in L_{i,j}}\alpha_{i,j,\ell}^N=|U_{i,j}^k|$). Define $\phi_n\in W^{1,p}_0(U;\RR^m)$ by
\[
\phi_{n}(x):=\alpha_{i,j,\ell}\varphi_{i,j}\left({x-a_{i,j,\ell}\over \alpha_{i,j,\ell}}\right)\hbox{ if }x\in a_{i,j,\ell}+\alpha_{i,j,\ell}Y.
\]
Then:
\begin{itemize}
\item[$\diamond$] $\|\phi_n\|_{L ^\infty(U;\RR^m)}\leq{1\over n}\max_{i\in I, j\in J_i^k}\|\varphi_{i,j}\|_{L^\infty(Y;\RR^m)}$ for all $n\geq 1$;
\item[$\diamond$] $\|\nabla\phi_n\|_{L^\infty(U;\RR^m)}\leq\max_{i\in I, j\in J_i^k}\|\nabla\varphi_{i,j}\|_{L^\infty(Y;\RR^m)}$ for all $n\geq 1$,
\end{itemize}
hence (up to a subsequence) $\phi_n\stackrel{*}{\wto}0$ in $W^{1,\infty}(U;\RR^m)$, where ``$\stackrel{*}{\wto}$" denotes the weak$^*$ convergence in $W^{1,\infty}(U;\RR^m)$. Consequently $\phi_n\wto 0$ in $W^{1,p}(U;\RR^m)$ and so  (up to a subsequence) $\phi_n\to 0$ in $L^p(U;\RR^m)$. Using \eqref{ZfAssumptioN} and (C$_2$) we see that 
\[
\sum_{i\in I}\sum_{j\in J_i^k}\int_{U_{i,j}^k}f(x_{i,j}^k,\xi_i+\nabla\phi_n(x))dx\leq {c\over C}(|U|+\|\nabla\phi\|_{L^p(U;\RR^m)}^p)+{\delta\over C}\hbox{ for all }n\geq 1
\]
with $c,C>0$. Taking (C$_1$), \eqref{DiamHypo} and \eqref{ContiNuiTyoFomega} into account it follows that for every $n\geq 1$,
\[
\int_U f\left(x,\nabla\phi(x)+\nabla\phi_n(x)\right)dx\leq K(\delta)+\sum_{i\in I}\sum_{j\in J_i^k}\int_{U_{i,j}^k}f(x_{i,j}^k,\xi_i+\nabla\phi_n(x))dx
\]
with $\delta(|U|+{c\over C}(|U|+\|\nabla\phi\|_{L^p(U;\RR^m)}^p)+{\delta\over C})=:K(\delta)\to 0$ as $\delta\to 0$. Moreover, from \eqref{ZfAssumptioN} we have
\begin{eqnarray*}
\sum_{i\in I}\sum_{j\in J_i^k}\int_{U_{i,j}^k}f(x_{i,j}^k,\xi_i+\nabla\phi_n(x))dx&=&\sum_{i\in I}\sum_{j\in J_i^k}|U^k_{i,j}|\int_Yf(x_{i,j}^k,\xi_i+\nabla\varphi_{i,j}(x))dx\\
&\leq&\sum_{i\in I}\sum_{j\in J_{i}^k}|U_{i,j}^k|\Z f(x_{i,j}^k,\xi_i)+\delta,
\end{eqnarray*}
hence, for every $n\geq 1$,
\[
\int_U f\left(x,\nabla\phi(x)+\nabla\phi_n(x)\right)dx\leq \sum_{i\in I}\sum_{j\in J_{i}^k}|U_{i,j}^k|\Z f(x_{i,j}^k,\xi_i)+K(\delta)+\delta.
\]
As $\phi+\phi_n\in W^{1,p}(U;\RR^m)$ (resp. $\phi+\phi_n\in W^{1,p}_0(U;\RR^m)$) for all $n\geq 1$ and $\phi+\phi_n\to\phi$ in  $L^p(U;\RR^m)$ it follows that 
\[
\overline{F}(\phi)\leq \sum_{i\in I}\sum_{j\in J_{i}^k}|U_{i,j}^k|\Z f(x_{i,j}^k,\xi_i)+K(\delta)+\delta
\]
\[
\hbox{(resp. }\overline{F}_0(\phi)\leq \sum_{i\in I}\sum_{j\in J_{i}^k}|U_{i,j}^k|\Z f(x_{i,j}^k,\xi_i)+K(\delta)+\delta\hbox{).}
\]
Letting $k\to+\infty$ and using \eqref{RiemannSum} we deduce that
\[
\overline{F}(\phi)\leq\int_U\Z f(x,\nabla\phi(x))dx+K(\delta)+\delta
\]
\[
\hbox{ (resp. }\overline{F}_0(\phi)\leq\int_U\Z f(x,\nabla\phi(x))dx+K(\delta)+\delta\hbox{)}
\]
and \eqref{CoNCLuSIONLeMMaKeY} follows by letting $\delta\to 0$.
\end{proof}


\section{Homogenization theorem} 

Let $\Omega\subset\RR^N$ be a bounded open set with $|\partial\Omega|=0$, let $W:\RR^N\times \MM^{m\times N}\to[0,+\infty]$ be a normal integrand and, for each $\eps>0$, let $I_\eps:W^{1,p}(\Omega;\RR^m)\to[0,+\infty]$ (with $p\geq 1$) be defined by \eqref{IntegralFunct}. To accomplish our asymptotic analysis as $\eps\to 0$, we will use De Giorgi's $\Gamma$-convergence which can be defined as follows (for more details see \cite{dalmaso93,braides-defranceschi98,braides06}).
\begin{definition}\label{DefGamma-Convergence}
We say that $I_\eps$ $\Gamma$-converges to $I_{\rm hom}:W^{1,p}(\Omega;\RR^m)\to[0,+\infty]$ with respect to the $L^p(\Omega;\RR^m)$-convergence as $\eps\to 0$, and we write $I_{\rm hom}=\Gamma\hbox{\rm -}\lim_{\eps\to 0}I_\eps$, if 
\[
\left(\Gamma\hbox{-}\liminf_{\eps\to 0}I_\eps\right)(\phi)=\left(\Gamma\hbox{-}\limsup_{\eps\to 0}I_\eps\right)(\phi)=I_{\rm hom}(\phi)
\]
for all $\phi\in W^{1,p}(\Omega;\RR^m)$ with:
\[
\left(\Gamma\hbox{-}\liminf_{\eps\to 0}I_\eps\right)(\phi):=\inf\left\{\liminf_{\eps\to 0}I_\eps(\phi_\eps):\phi_\eps\to \phi\hbox{ in }L^{p}(\Omega;\RR^m)\right\};
\]
\[
\left(\Gamma\hbox{-}\limsup_{\eps\to 0}I_\eps\right)(\phi):=\inf\left\{\limsup_{\eps\to 0}I_\eps(\phi_\eps):\phi_\eps\to \phi\hbox{ in }L^{p}(\Omega;\RR^m)\right\}.
\]
\end{definition}

Braides proved, in the $p$-polynomial growth case, the following theorem (see \cite{braides85,braides-defranceschi98}, see also \cite{muller87}).
\begin{theorem}\label{BraidesTheorem}
If $W$ satisfies \eqref{CoercivityOfW}, \eqref{PeriodicityOfW} and \eqref{GrowthCondition} then $I_{\rm hom}=\Gamma\hbox{\rm -}\lim_{\eps\to 0}I_\eps$ with $I_{\rm hom}$ defined by \eqref{IntegralFunctHom} and $W_{\rm hom}:\MM^{m\times N}\to[0,+\infty]$ given by \eqref{Homogenized-Density}.
\end{theorem}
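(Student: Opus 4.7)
The plan is to verify the $\Gamma$-$\limsup$ inequality $\bigl(\Gamma\text{-}\limsup_{\eps\to 0} I_\eps\bigr)(\phi) \leq I_{\rm hom}(\phi)$ and the $\Gamma$-$\liminf$ inequality $\bigl(\Gamma\text{-}\liminf_{\eps\to 0} I_\eps\bigr)(\phi) \geq I_{\rm hom}(\phi)$ separately. The upper bound is constructive: recovery sequences are built from near-minimizers of the cell problem \eqref{Homogenized-Density} oscillating at scale $\eps$. The lower bound is obtained by a blow-up/localization that recognizes the same cell problem in the local behavior of an arbitrary admissible sequence.

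For the upper bound I would first treat an affine map $\phi(x) = \xi x + b$: fix $k \geq 1$ and a $\delta$-almost minimizer $\varphi_k \in W^{1,p}_0(kY;\RR^m)$ of the $k$-th cell problem, extend $\varphi_k$ by $kY$-periodicity to $\RR^N$, and set $\phi_\eps(x) := \xi x + b + \eps\varphi_k(x/\eps)$. Then $\phi_\eps \to \phi$ in $L^p(\Omega;\RR^m)$ and, by periodicity of the map $y \mapsto W(y,\xi+\nabla\varphi_k(y))$ together with the Riemann--Lebesgue weak limit for bounded periodic functions,
\[
\lim_{\eps\to 0} I_\eps(\phi_\eps) \;=\; \frac{|\Omega|}{k^N}\int_{kY} W\bigl(y, \xi + \nabla\varphi_k(y)\bigr)\,dy \;\leq\; |\Omega|\bigl(W_{\rm hom}(\xi)+\delta\bigr).
\]
A diagonal sequence in $(k,\delta)$ yields the bound for $\phi$ affine. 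The extension to $\phi \in \Aff(\Omega;\RR^m)$ is obtained by running the construction on each cell of the partition and gluing with a De Giorgi cut-off whose error is absorbed thanks to the polynomial growth \eqref{GrowthCondition}. Finally, since $W_{\rm hom}$ inherits $p$-polynomial growth from $W$ and is quasiconvex --- so $\phi \mapsto \int_\Omega W_{\rm hom}(\nabla\phi)\,dx$ is continuous on $W^{1,p}$ --- the $W^{1,p}$-density of $\Aff(\Omega;\RR^m)$ combined with the $L^p$-lower semicontinuity of the $\Gamma$-$\limsup$ extends the bound to every $\phi \in W^{1,p}(\Omega;\RR^m)$.

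For the lower bound, given $\phi_\eps \to \phi$ in $L^p$ with $\liminf_\eps I_\eps(\phi_\eps) < +\infty$, the $p$-coercivity \eqref{CoercivityOfW} yields boundedness of $\{\phi_\eps\}$ in $W^{1,p}$ and $\phi_\eps \wto \phi$ along a subsequence. I would introduce the Radon measures $\mu_\eps := W(\cdot/\eps, \nabla\phi_\eps)\,dx$ on $\Omega$, extract a weak-$*$ limit $\mu_\eps \stackrel{*}{\wto}\mu$, and show at $\mathcal{L}^N$-a.e.\ $x_0 \in \Omega$ that $\frac{d\mu}{dx}(x_0) \geq W_{\rm hom}(\nabla\phi(x_0))$. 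Working at a Lebesgue point of $\nabla\phi$, pick a cube $Q_\rho(x_0)$, select via Fubini a ``good'' radius on which the trace of $\phi_\eps$ is controlled, and glue $\phi_\eps$ to the affine map $y \mapsto \nabla\phi(x_0)\cdot y$ in a thin shell near $\partial Q_\rho$ to obtain, after rescaling by $k \approx \rho/\eps$, an admissible competitor in $W^{1,p}_0(kY;\RR^m)$ for the cell problem; letting $\eps \to 0$ and then $\rho \to 0$ in \eqref{Homogenized-Density} produces the claimed lower bound.

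The main obstacle will be this fundamental-estimate gluing step in the lower bound: replacing the trace of $\phi_\eps$ on $\partial Q_\rho$ by zero without inflating the energy. Both the $p$-coercivity \eqref{CoercivityOfW} (to bound annular integrals a priori) and the $p$-polynomial upper bound \eqref{GrowthCondition} (to absorb the cut-off overhead) are used here; it is precisely the failure of this step for the singular integrands of the form \eqref{TypicalIntegrand}--\eqref{TypeOfH} that motivates the relaxation-based detour developed in the rest of the paper.
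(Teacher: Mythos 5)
Note first that the paper does \emph{not} prove Theorem~\ref{BraidesTheorem}: it invokes it as a known result, citing Braides \cite{braides85}, the monograph of Braides and Defranceschi \cite{braides-defranceschi98}, and M\"uller \cite{muller87}. There is therefore no in-paper proof to compare against; your sketch must be measured against the literature.

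As such, your plan is a sound outline of the standard \emph{direct} proof and closely tracks M\"uller's treatment in \cite{muller87}: recovery sequences built from periodically extended cell correctors at scale $\eps$, identification of the limiting energy via Riemann--Lebesgue and a diagonalization in $(k,\delta)$, a De Giorgi cut-off to pass from affine to piecewise-affine targets, and a density argument (using the $p$-polynomial growth and quasiconvexity of $W_{\rm hom}$ together with lower semicontinuity of the $\Gamma$-$\limsup$) to reach all of $W^{1,p}(\Omega;\RR^m)$; then a blow-up with Radon measures and a fundamental-estimate gluing for the $\Gamma$-$\liminf$. By contrast, the textbook route in \cite{braides-defranceschi98} is indirect: it first applies the abstract compactness theorem for $\Gamma$-convergence to extract a $\Gamma$-convergent subsequence, then an integral representation theorem to write the limit as $\int_\Omega g(x,\nabla\phi)\,dx$ for a quasiconvex Carath\'eodory $g$, then uses periodicity to show $g$ is $x$-independent, and finally identifies $g$ with the cell formula. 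Your approach avoids the abstract representation machinery at the cost of running the blow-up and gluing by hand; the textbook route is structurally cleaner but relies on heavier general theory. Both are standard, and you are correct that the two-sided growth \eqref{CoercivityOfW}--\eqref{GrowthCondition} is load-bearing in the fundamental-estimate step: its failure for singular integrands of type \eqref{TypicalIntegrand}--\eqref{TypeOfH} is exactly why the paper routes around Theorem~\ref{BraidesTheorem} through the relaxation Theorem~\ref{GeneralizedRelaxationTheorem} rather than attempting to reprove homogenization with a singular $W$. One small point worth stating explicitly in the density step: strong $W^{1,p}$-continuity of $\phi\mapsto\int_\Omega W_{\rm hom}(\nabla\phi)\,dx$ uses not just the $p$-growth of $W_{\rm hom}$ but the local Lipschitz estimate $|W_{\rm hom}(\xi)-W_{\rm hom}(\xi')|\leq C\bigl(1+|\xi|^{p-1}+|\xi'|^{p-1}\bigr)|\xi-\xi'|$, which follows from rank-one convexity plus growth.
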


To prove our homogenization theorem (see Theorem \ref{HomogenizationTheorem}) we will need Theorems \ref{GeneralizedRelaxationTheorem} and \ref{BraidesTheorem} and the following classical property of the $\Gamma$-convergence (see \cite[Proposition 2.5]{braides06}). 

\begin{proposition}\label{FundamentalProperty}
The $\Gamma$-limit is stable by substituting $I_\eps$ by its relaxed functional $\overline{I}_\eps$, i.e., 
\[
\Gamma\hbox{-}\liminf\limits_{\eps\to 0}I_\eps=\Gamma\hbox{-}\liminf\limits_{\eps\to 0}\overline{I}_\eps\hbox{ and }\Gamma\hbox{-}\limsup\limits_{\eps\to 0}I_\eps=\Gamma\hbox{-}\limsup\limits_{\eps\to 0}\overline{I}_\eps,
\]
where, for each $\eps>0$, $\overline{I}_\eps:W^{1,p}(\Omega;\RR^m)\to[0,+\infty]$ is given by
\[
\overline{I}_\eps(\phi):=\inf\left\{\liminf_{n\to+\infty}I_\eps(\phi_n):\phi_n\to\phi\hbox{ in }L^p(\Omega;\RR^m)\right\}.
\]
\end{proposition}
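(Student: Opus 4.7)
The plan is to prove each of the two equalities by splitting it into two inequalities. One direction is immediate from the pointwise bound $\overline{I}_\eps\leq I_\eps$; the other requires a diagonal extraction from recovery sequences for the relaxation.

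For the easy direction I would observe that for every $\phi\in W^{1,p}(\Omega;\RR^m)$ the constant sequence $\phi_n\equiv\phi$ is admissible in the infimum defining $\overline{I}_\eps(\phi)$, so $\overline{I}_\eps\leq I_\eps$ pointwise on $W^{1,p}(\Omega;\RR^m)$. Given any $\phi_\eps\to\phi$ in $L^p(\Omega;\RR^m)$, passing to $\liminf_{\eps\to 0}$ (resp.\ $\limsup_{\eps\to 0}$) in $\overline{I}_\eps(\phi_\eps)\leq I_\eps(\phi_\eps)$ and then taking the infimum over all such sequences yields
\[
\Gamma\hbox{-}\liminf_{\eps\to 0}\overline{I}_\eps\leq \Gamma\hbox{-}\liminf_{\eps\to 0}I_\eps\quad\hbox{and}\quad\Gamma\hbox{-}\limsup_{\eps\to 0}\overline{I}_\eps\leq \Gamma\hbox{-}\limsup_{\eps\to 0}I_\eps.
\]

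For the reverse direction I would fix $\phi\in W^{1,p}(\Omega;\RR^m)$ and an arbitrary sequence $\phi_\eps\to\phi$ in $L^p(\Omega;\RR^m)$. By definition of $\overline{I}_\eps(\phi_\eps)$, for each $\eps>0$ there is a sequence $\{\psi_{\eps,n}\}_n\subset W^{1,p}(\Omega;\RR^m)$ with $\psi_{\eps,n}\to\phi_\eps$ in $L^p$ as $n\to+\infty$ and $\liminf_{n\to+\infty} I_\eps(\psi_{\eps,n})\leq \overline{I}_\eps(\phi_\eps)+\eps$. I would then perform a diagonal extraction, choosing $n(\eps)$ so large that
\[
\|\psi_{\eps,n(\eps)}-\phi_\eps\|_{L^p(\Omega;\RR^m)}\leq \eps\quad\hbox{and}\quad I_\eps(\psi_{\eps,n(\eps)})\leq \overline{I}_\eps(\phi_\eps)+2\eps.
\]
Setting $\tilde{\phi}_\eps:=\psi_{\eps,n(\eps)}$, the triangle inequality yields $\tilde{\phi}_\eps\to\phi$ in $L^p$, and
\[
\liminf_{\eps\to 0}I_\eps(\tilde{\phi}_\eps)\leq \liminf_{\eps\to 0}\overline{I}_\eps(\phi_\eps),\quad \limsup_{\eps\to 0}I_\eps(\tilde{\phi}_\eps)\leq \limsup_{\eps\to 0}\overline{I}_\eps(\phi_\eps).
\]
Taking the infimum over all $\phi_\eps\to\phi$ in $L^p$ gives the two reverse inequalities and completes the proof.

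The only point requiring care is the diagonal extraction: $n(\eps)$ must be selected so that $\tilde{\phi}_\eps\to\phi$ in $L^p$ despite passing through two successive limits. This is not a real obstacle because the $L^p$-convergence is metrizable (so the triangle inequality applies) and the $+\eps$ slack in the relaxation infimum allows simultaneous control of closeness and energy. Note that no structural hypothesis on $W$ is used beyond $I_\eps\geq 0$; the proposition holds because $\overline{I}_\eps$ is by construction the largest $L^p$-lower semicontinuous minorant of $I_\eps$, and both $\Gamma$-$\liminf$ and $\Gamma$-$\limsup$ see only this lower semicontinuous envelope at each scale.
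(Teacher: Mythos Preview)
Your argument is correct. The paper does not give its own proof of this proposition; it simply records it as a classical property of $\Gamma$-convergence and cites \cite[Proposition~2.5]{braides06}. The two-step scheme you use---the pointwise bound $\overline{I}_\eps\leq I_\eps$ for one direction, and a metric diagonal extraction from near-optimal relaxation sequences for the other---is exactly the standard argument found in that reference, so there is nothing further to compare beyond noting that your write-up supplies what the paper omits.
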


Set $\mathcal{A}_{\rm per}^p:=\{f\in\mathcal{A}^p:f\hbox{ is $1$-periodic}\}$. The main result of the paper is the following. (When $m=N$, it can handle integrands having a singular behavior of type $W(x,\xi)\to+\infty$ as $\det\xi\to 0$, see Corollary \ref{corollary1}).  

\begin{theorem}\label{HomogenizationTheorem}
If $W\in\mathcal{A}^p_{\rm per}$ then $I_{\rm hom}=\Gamma\hbox{\rm -}\lim_{\eps\to 0}I_\eps$ with $I_{\rm hom}$ defined by \eqref{IntegralFunctHom} and $W_{\rm hom}:\MM^{m\times N}\to[0,+\infty]$ given by \eqref{Homogenized-Density}.
\end{theorem}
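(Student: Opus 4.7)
The plan is to replace $I_\eps$ by its relaxation $\overline{I}_\eps$, which by Proposition~\ref{FundamentalProperty} has the same $\Gamma$-limit, to identify $\overline{I}_\eps$ with the integral functional associated to the well-behaved integrand $\Z W(\cdot/\eps,\cdot)$, and then to apply Braides's theorem (Theorem~\ref{BraidesTheorem}) to this new functional.

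First I would check that $\Z W$ meets Braides's hypotheses: $\Z W$ is $1$-periodic (inherited from $W$), of $p$-polynomial growth by (C$_2$), $p$-coercive (since $|\cdot|^p$ is convex hence quasiconvex, so $\Z W(x,\xi)\geq \Z[C|\cdot|^p](\xi)=C|\xi|^p$), and Carath\'eodory --- hence normal --- by Proposition~\ref{FonsecaProperties}(c) (since $\Z W$ is finite by (C$_2$)). Setting $J_\eps(\phi):=\int_\Omega \Z W(x/\eps,\nabla\phi(x))\,dx$, Theorem~\ref{BraidesTheorem} then yields $J_{\rm hom}=\Gamma\hbox{-}\lim_{\eps\to 0}J_\eps$ with $J_{\rm hom}(\phi):=\int_\Omega(\Z W)_{\rm hom}(\nabla\phi(x))\,dx$, where $(\Z W)_{\rm hom}$ is defined by the cell formula \eqref{Homogenized-Density} applied to $\Z W$.

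Next, for each fixed $\eps>0$, the rescaled integrand $W_\eps(x,\xi):=W(x/\eps,\xi)$ still belongs to $\mathcal{A}^p$: $p$-coercivity and (C$_2$) transfer directly, while (C$_1$) transfers with $\omega$ replaced by $t\mapsto\omega(t/\eps)$. Theorem~\ref{GeneralizedRelaxationTheorem} then gives $\overline{I}_\eps(\phi)=\int_\Omega\Z W_\eps(x,\nabla\phi(x))\,dx=J_\eps(\phi)$, since $\Z W_\eps(x,\xi)=\Z W(x/\eps,\xi)$. It then remains only to identify $(\Z W)_{\rm hom}$ with $W_{\rm hom}$. For fixed $\xi\in\MM^{m\times N}$ and $k\geq 1$, the ``as a consequence'' part of Theorem~\ref{GeneralizedRelaxationTheorem}, applied on $U=kY$ to the shifted integrand $g(x,\eta):=W(x,\xi+\eta)$ (which satisfies $\Z g(x,\eta)=\Z W(x,\xi+\eta)$ by definition of $\Z$), yields
\[
\inf_{\varphi\in W^{1,p}_0(kY;\RR^m)}\int_{kY} W(x,\xi+\nabla\varphi(x))\,dx=\inf_{\varphi\in W^{1,p}_0(kY;\RR^m)}\int_{kY}\Z W(x,\xi+\nabla\varphi(x))\,dx,
\]
and hence $W_{\rm hom}(\xi)=(\Z W)_{\rm hom}(\xi)$ after dividing by $k^N$ and taking the infimum over $k$. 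Combining everything through Proposition~\ref{FundamentalProperty} then gives $\Gamma\hbox{-}\lim I_\eps=\Gamma\hbox{-}\lim\overline{I}_\eps=\Gamma\hbox{-}\lim J_\eps=J_{\rm hom}=I_{\rm hom}$.

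The main delicacy is this identification of the two cell formulas: the shift $g(x,\eta)=W(x,\xi+\eta)$ is only $p$-coercive up to an additive constant of order $|\xi|^p$, so strictly speaking $g\not\in\mathcal{A}^p$. One either applies Theorem~\ref{GeneralizedRelaxationTheorem} to $g$ plus this harmless constant, or observes that the proof only uses the growth condition (C$_2$) and continuity in $x$, not strict $p$-coercivity from the origin. All other steps are routine transfers of properties through the operator $\Z$ and the rescaling $x\mapsto x/\eps$.
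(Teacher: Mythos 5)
Your proposal follows essentially the same route as the paper: pass to $\overline{I}_\eps$ via Proposition~\ref{FundamentalProperty}, identify $\overline{I}_\eps$ with the integral of $\Z W(\cdot/\eps,\cdot)$ via Theorem~\ref{GeneralizedRelaxationTheorem}, apply Braides's Theorem~\ref{BraidesTheorem} to that functional, and invoke the ``as a consequence'' part of Theorem~\ref{GeneralizedRelaxationTheorem} with the shifted integrand on $kY$ to match the cell formula for $\Z W$ with that for $W$. The one place you go beyond the paper is in noticing that $f_\xi(x,\zeta)=W(x,\xi+\zeta)$ is not literally $p$-coercive in the sense of \eqref{CoercivityOfW} (the paper merely asserts ``it is easy to see that $f_\xi\in\mathcal{A}^p$''), and your first fix --- applying the theorem to $f_\xi + C|\xi|^p$, which restores coercivity with constant $C2^{-p}$, preserves (C$_1$) and (C$_2$), and shifts both infima by the same amount $C|\xi|^p|kY|$ --- is the cleanest way to close that small gap.
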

\begin{proof}
By Proposition \ref{FundamentalProperty} it suffices to prove Theorem \ref{HomogenizationTheorem} with ``$\overline{I}_\eps$" instead of ``$I_\eps$". Fix any $\eps>0$ and consider $f_\eps:\RR^N\times\MM^{m\times N}\to [0,+\infty]$ given by $f_\eps(x,\xi):=W({x\over\eps},\xi)$. As $W\in\mathcal{A}^p_{\rm per}$ and $\Z f_\eps(x,\xi)=\Z W({x\over\eps},\xi)$ for all $(x,\xi)\in\RR^N\times\MM^{m\times N}$ it is easy to see that $f_\eps\in\mathcal{A}^p$. Applying Theorem \ref{GeneralizedRelaxationTheorem} (with $f=f_\eps$) we deduce that for every $\eps>0$,
\[
\overline{I}_\eps(\phi)=\int_\Omega\Z W\left({x\over\eps},\nabla\phi(x)\right)dx,
\] 
where $\Z W$ is clearly $p$-coercive, $1$-periodic and of $p$-polynomial growth. From Brai-des's homogenization theorem (see Theorem \ref{BraidesTheorem}) it follows that $I_{\rm hom}=\Gamma\hbox{\rm -}\lim_{\eps\to 0}\overline{I}_\eps$ with $I_{\rm hom}$ defined by \eqref{IntegralFunctHom} and $W_{\rm hom}:\MM^{m\times N}\to[0,+\infty]$ given by 
\[
W_{\rm hom}(\xi)=\inf_{k\geq 1}{1\over k^N}\inf\left\{\int_{kY}\Z W(x,\xi+\nabla\varphi(x))dx:\varphi\in W^{1,p}_0(kY;\RR^m)\right\}.
\]
Fix any $k\geq 1$, any $\xi\in\MM^{m\times N}$ and consider $f_\xi:\RR^N\times \MM^{m\times N}\to[0,+\infty]$ given by $f_\xi(x,\zeta):=W(x,\xi+\zeta)$. As $W\in\mathcal{A}^p_{\rm per}$ and $\Z f_\xi(x,\zeta)=\Z W(x,\xi+\zeta)$ for all $(x,\zeta)\in\RR^N\times\MM^{m\times N}$ it is easy to see that $f_\xi\in\mathcal{A}^p$. Applying Theorem \ref{GeneralizedRelaxationTheorem} (with $U=k Y$ and $f=f_\xi$) we deduce that for every $k\geq 1$ and every $\xi\in\MM^{m\times N}$,
\[
\inf_{\varphi\in W^{1,p}_0(kY;\RR^m)}\int_{kY}W(x,\xi+\nabla\varphi(x))dx=\inf_{\varphi\in W^{1,p}_0(kY;\RR^m)}\int_{kY}\Z W(x,\xi+\nabla\varphi(x))dx
\]
and the theorem follows.
\end{proof}


\section{Application}

The following condition on the normal integrand $f:\RR^N\times\MM^{N\times N}\to[0,+\infty]$ is compatible with a singular behavior of type $f(x,\xi)\to+\infty$ as $\det\xi\to 0$.
\begin{itemize}
\item[($\hat{\rm C}_2$)] There exist $\alpha,\beta>0$ such that for every $(x,\xi)\in\RR^N\times\MM^{N\times N}$,
\[
\hbox{if }|\det\xi|\geq\alpha\hbox{ then }f(x,\xi)\leq\beta(1+|\xi|^p).
\]
\end{itemize}

Typically, the function $H:\RR^N\times\MM^{N\times N}\to[0,+\infty]$ defined by
\[
H(x,\xi):=|\xi|^p+a(x)h(\det\xi),
\]
where $0\leq a\in L^\infty(\RR^N)$ and $h:\RR\to[0,+\infty[$ is a measurable function for which there exist $\gamma,\delta>0$ such that $h(t)\leq\delta$ for all $|t|\geq\gamma$, satisfies ($\hat{\rm C}_2$) with $\alpha=\gamma$ and $\beta=\max\{1,\delta\|a\|_{L^\infty(\RR^N)}\}$. The singular behavior $H(x,\xi)\to+\infty$ as $\det\xi\to 0$ is possible (for example when $h$ is given by \eqref{TypeOfH}).

Denote the class of $p$-coercive normal integrands $f:\RR^N\times\MM^{N\times N}\to [0,+\infty]$ satisfying (C$_1$) and ($\hat{\rm C}_2$) by $\mathcal{S}^p$ and set $\mathcal{S}^p_{\rm per}:=\{f\in\mathcal{S}^p:f\hbox{ is $1$-periodic}\}$. When $0<\eta\leq a\in L^\infty(\RR^N)\cap C(\RR^N)$ we have $H\in\mathcal{S}^p$ since $H$ satisfies (C$_1$) with $\omega(t):={1\over\eta}\sup\{|a(x_1)-a(x_2)|:|x_1-x_2|\leq t\}$. If moreover $a$ is $1$-periodic then $H\in\mathcal{S}^p_{\rm per}$.

The following theorem, whose proof is given below, is a slight improvement of \cite[Proposition 1.8]{oah-jpm08a} (see also \cite[th\'eor\`eme 2.20]{oah-jpm09}).

\begin{theorem}\label{SingularGrowthConditionTheorem}
Let $f:\RR^N\times\MM^{N\times N}\to[0,+\infty]$ be a normal integrand. If $f$ satisfies {\rm($\hat{\rm C}_2$)} then $\Z f$ is of $p$-polynomial growth, i.e., 
\[
\Z f(x,\xi)\leq c(1+|\xi|^p)\hbox{ for all }(x,\xi)\in\RR^N\times\MM^{N\times N}\hbox{ and some } c>0.
\]
\end{theorem}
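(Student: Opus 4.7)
The plan is to produce, for every $\xi \in \MM^{N \times N}$, a Lipschitz test function $\varphi \in W^{1,\infty}_0(Y;\RR^N)$ whose gradient keeps $\xi+\nabla\varphi$ in the ``good set'' $K_\alpha := \{A\in \MM^{N\times N} : |\det A|\geq \alpha\}$ almost everywhere, with the bound $\|\xi+\nabla\varphi\|_{L^\infty(Y;\MM^{N\times N})} \leq C_N(|\xi|+\alpha^{1/N})$ for a dimensional constant $C_N$. Given such a $\varphi$, condition $(\hat{\rm C}_2)$ immediately yields
\[
\Z f(x,\xi) \leq \int_Y f(x,\xi+\nabla\varphi(y))\,dy \leq \beta \bigl(1 + C_N^p(|\xi|+\alpha^{1/N})^p\bigr) \leq c(1+|\xi|^p),
\]
which is the desired $p$-polynomial growth. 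The case $|\det\xi|\geq\alpha$ is disposed of by taking $\varphi\equiv 0$, so the real work is constructing $\varphi$ when $|\det\xi|<\alpha$.

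For the nontrivial case, we aim to force $\nabla\varphi$ to take its values in the finite set $\{D_\sigma := \mathrm{diag}(\sigma_1 T,\ldots,\sigma_N T) : \sigma \in \{-1,+1\}^N\}$ for a scalar $T>0$ to be chosen. This set arises naturally from $N$ successive rank-one laminations (the $k$-th in the rank-one direction $e_k\otimes e_k$ with amplitude $T$) and forms a depth-$N$ laminate of barycenter $0$; the classical realization of finite-depth laminates produces a piecewise affine $\varphi \in \Aff_0(Y;\RR^N)$ whose gradient lies in $\{D_\sigma\}$ almost everywhere, with each of the $2^N$ values attained on a region of measure $2^{-N}|Y|$. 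To fix $T$, we expand $\det(\xi + D_\sigma)$ via the Leibniz formula: the identity permutation contributes $\prod_{i=1}^N(\xi_{ii}+\sigma_i T)$, of absolute value at least $(T-\|\xi\|_\infty)^N$ as soon as $T\geq 2\|\xi\|_\infty$, whereas the remaining $N!-1$ permutations each contain at least one off-diagonal factor $\xi_{ij}$ with $i\neq j$ and therefore contribute at most $C_N\|\xi\|_\infty(T+\|\xi\|_\infty)^{N-1}$ in total. Thus
\[
|\det(\xi+D_\sigma)| \geq (T-\|\xi\|_\infty)^N - C_N\|\xi\|_\infty(T+\|\xi\|_\infty)^{N-1},
\]
and choosing $T = C'(|\xi|+\alpha^{1/N})$ with $C'$ a sufficiently large dimensional constant makes this lower bound exceed $\alpha$ uniformly in $\sigma$, completing Step 2 and yielding the estimate above.

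The main obstacle is the realization in Step 2: producing an explicit $\varphi \in \Aff_0(Y;\RR^N)$ whose gradient takes its prescribed $2^N$ diagonal values \emph{exactly}, not just up to an error set of small measure. Unlike in the standard polynomial-growth relaxation setting, a small exceptional set cannot be absorbed here because $f$ may take the value $+\infty$ outside $K_\alpha$. The remedy is the classical iterated-lamination construction: one nests the $N$ rank-one cuttings inside shrinking sub-cubes, exploiting the scale invariance provided by Proposition \ref{FonsecaProperties}(a) (and the freedom to change the underlying domain $D$) so that each boundary corrector at level $k$ is itself a piece of a rank-one lamination whose own gradient values still belong to the finite diagonal family. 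In particular the construction never leaves $\{D_\sigma\}$, so the integrand $f(x,\xi+\nabla\varphi(\cdot))$ remains controlled by $\beta(1+|\xi+\nabla\varphi|^p)$ on all of $Y$, yielding the stated $p$-polynomial growth of $\Z f$.
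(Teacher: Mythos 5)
Your plan reduces to constructing, for $|\det\xi|<\alpha$, a function $\varphi\in W^{1,\infty}_0(Y;\RR^N)$ with $\nabla\varphi$ taking values \emph{exactly} in the finite set of diagonal matrices $\{D_\sigma={\rm diag}(\sigma_1T,\ldots,\sigma_NT):\sigma\in\{\pm1\}^N\}$. This is impossible, and the ``nested sub-cubes'' remedy you sketch cannot repair it. The obstruction is a rigidity of diagonal gradients: every matrix in $\{D_\sigma\}$ is diagonal, so $\partial_j\varphi_i=0$ a.e.\ for all $i\neq j$; on the connected set $Y$ this forces each component $\varphi_i$ to depend on $x_i$ alone. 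The boundary condition $\varphi=0$ on $\partial Y$ then forces $\varphi_i\equiv0$ on $Y$ (because $\varphi_i$ already vanishes on any face $\{x_j=0\}$ with $j\neq i$), contradicting $|\partial_i\varphi_i|=T>0$. So no nested lamination, no matter how cleverly arranged, can keep \emph{both} the boundary data \emph{and} the gradient constraint, precisely because the boundary correctors are forced out of the diagonal family.

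The paper avoids this by splitting the argument. First, Lemma \ref{lemma1} (Dacorogna--Ribeiro) supplies $\varphi\in W^{1,\infty}_0(Y;\RR^N)$ with $|\det(\xi+\nabla\varphi)|=\alpha$ a.e.; this uses the full level set $\{|\det|=\alpha\}$ (a much richer, non-rigid target than the $2^N$ diagonal matrices) and is proved by convex-integration/Baire-category techniques, not by finite lamination. This gives only \emph{finiteness} of $\Z f(x,\cdot)$, since $\|\nabla\varphi\|_{L^p}$ is not controlled in terms of $|\xi|$. Second, finiteness of $\Z f(x,\cdot)$ yields rank-one convexity (Proposition \ref{FonsecaProperties}(b)), hence $\Z f\leq\mathcal R f$, and the $p$-polynomial growth of $\mathcal R f$ is furnished by the separate Lemma \ref{lemma2} (Ben Belgacem). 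Your one-shot attempt to get a quantitative $L^\infty$ bound on $\nabla\varphi$ directly from a finite laminate is what fails; without it the Leibniz-formula estimate never enters and the claimed bound $\Z f(x,\xi)\leq c(1+|\xi|^p)$ does not follow from your construction.
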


 By Theorem \ref{SingularGrowthConditionTheorem} we have $\mathcal{S}^p\subset\mathcal{A}^p$ and so $\mathcal{S}^p_{\rm per}\subset\mathcal{A}^p_{\rm per}$. Hence, as a direct consequence of Theorems \ref{SingularGrowthConditionTheorem} and \ref{HomogenizationTheorem} we have
\begin{corollary}\label{corollary1}
If $W\in\mathcal{S}^p_{\rm per}$ then $I_{\rm hom}=\Gamma\hbox{\rm -}\lim_{\eps\to 0}I_\eps$ with $I_{\rm hom}$ defined by \eqref{IntegralFunctHom} and $W_{\rm hom}:\MM^{N\times N}\to[0,+\infty]$ given by \eqref{Homogenized-Density}.
\end{corollary}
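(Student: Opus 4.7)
The plan is to reduce Corollary \ref{corollary1} directly to Theorem \ref{HomogenizationTheorem} by establishing the inclusion $\mathcal{S}^p_{\rm per}\subset\mathcal{A}^p_{\rm per}$ and then invoking the homogenization theorem already proved.

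First I would unpack the definitions. An element $W\in\mathcal{S}^p_{\rm per}$ is, by definition, a $1$-periodic $p$-coercive normal integrand $W:\RR^N\times\MM^{N\times N}\to[0,+\infty]$ satisfying (C$_1$) and $(\hat{\rm C}_2)$. To prove $W\in\mathcal{A}^p_{\rm per}$, I must check: $p$-coercivity, the Serrin-type modulus condition (C$_1$), the growth condition (C$_2$) for $\Z W$, and $1$-periodicity. Three of these four items are literally included in the hypothesis; the only non-trivial requirement is (C$_2$), the $p$-polynomial upper bound on $\Z W$. This is precisely the content of Theorem \ref{SingularGrowthConditionTheorem} applied to $f=W$, so the inclusion $\mathcal{S}^p_{\rm per}\subset\mathcal{A}^p_{\rm per}$ is immediate once that theorem is in hand.

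Once the inclusion is in place, I would simply invoke Theorem \ref{HomogenizationTheorem} with the given $W$ (taking $m=N$): since $W\in\mathcal{A}^p_{\rm per}$, the conclusion is that $I_\eps$ $\Gamma$-converges in $L^p(\Omega;\RR^N)$ to $I_{\rm hom}$, with $I_{\rm hom}$ and $W_{\rm hom}$ given by \eqref{IntegralFunctHom} and \eqref{Homogenized-Density} respectively. Note that no substitution $W\leftrightarrow\Z W$ is needed in the final formula: the proof of Theorem \ref{HomogenizationTheorem} passes through $\Z W$ in an intermediate relaxation step, but the last application of Theorem \ref{GeneralizedRelaxationTheorem} made there shows that the infima of $\int_{kY}W(x,\xi+\nabla\varphi(x))\,dx$ and of $\int_{kY}\Z W(x,\xi+\nabla\varphi(x))\,dx$ over $\varphi\in W^{1,p}_0(kY;\RR^N)$ coincide, so the expression \eqref{Homogenized-Density} may be legitimately read off in terms of the original $W$.

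The main obstacle does not lie in the corollary itself (which is a one-line consequence of the two cited theorems) but rather in the auxiliary result behind the inclusion, namely Theorem \ref{SingularGrowthConditionTheorem}. There one must produce, for an arbitrary $\xi\in\MM^{N\times N}$ with possibly $|\det\xi|<\alpha$, a test function $\varphi\in W^{1,\infty}_0(Y;\RR^N)$ driving $\xi+\nabla\varphi$ into the region $\{|\det|\geq\alpha\}$ almost everywhere, while keeping $\int_Y|\xi+\nabla\varphi(y)|^p\,dy$ bounded by $c(1+|\xi|^p)$. This quantitative construction, presumably via a Vitali-type covering and a determinant-boosting (rank-one) perturbation supported on a controlled volume, is the real analytical step hidden inside this application.
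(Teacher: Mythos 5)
Your proof is correct and follows exactly the paper's route: apply Theorem \ref{SingularGrowthConditionTheorem} to get the inclusion $\mathcal{S}^p_{\rm per}\subset\mathcal{A}^p_{\rm per}$, then invoke Theorem \ref{HomogenizationTheorem}. The additional remark explaining why the homogenized density can be expressed with $W$ rather than $\Z W$ accurately reflects the last step in the proof of Theorem \ref{HomogenizationTheorem}.
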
 

\begin{remark}
From Theorems \ref{SingularGrowthConditionTheorem} and \ref{GeneralizedRelaxationTheorem} we obtain
\begin{corollary}\label{corollary2}
If $f\in\mathcal{S}^p$ then \eqref{IntRep1} and \eqref{IntRep2} hold with $\overline{f}=\Z f=\mathcal{Q} f$.
\end{corollary}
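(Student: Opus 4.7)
The plan is to reduce Corollary \ref{corollary2} to Theorem \ref{GeneralizedRelaxationTheorem} by showing the inclusion $\mathcal{S}^p\subset\mathcal{A}^p$. This inclusion is the only substantive thing to check, and the ingredient that makes it work has already been isolated as Theorem \ref{SingularGrowthConditionTheorem}, so the argument is essentially a bookkeeping one.

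First I would compare the two classes side by side. A function $f\in\mathcal{S}^p$ is, by definition, a $p$-coercive normal integrand $f:\RR^N\times\MM^{N\times N}\to[0,+\infty]$ satisfying (C$_1$) and ($\hat{\rm C}_2$); a function $f\in\mathcal{A}^p$ is, by definition, a $p$-coercive normal integrand (with $m=N$ here) satisfying (C$_1$) and (C$_2$). Thus the only gap between membership in $\mathcal{S}^p$ and membership in $\mathcal{A}^p$ is: does ($\hat{\rm C}_2$) entail (C$_2$)? This is precisely the content of Theorem \ref{SingularGrowthConditionTheorem}, which asserts that if $f$ satisfies ($\hat{\rm C}_2$) then $\Z f$ is of $p$-polynomial growth, i.e.\ (C$_2$) holds. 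Hence $\mathcal{S}^p\subset\mathcal{A}^p$.

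Next I would invoke Theorem \ref{GeneralizedRelaxationTheorem}: for every $f\in\mathcal{A}^p$ and every bounded open $U\subset\RR^N$ with $|\partial U|=0$, the integral representation formulas \eqref{IntRep1} and \eqref{IntRep2} hold with $\overline{f}=\Z f=\mathcal{Q}f$. Applying this to our $f\in\mathcal{S}^p\subset\mathcal{A}^p$ finishes the proof.

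There is no real obstacle here, since the technical work has been done in the preceding section. The only point to double-check when writing the proof is that the notion of $p$-coercivity, the Serrin-type continuity (C$_1$), and the normal integrand hypothesis appearing in the definition of $\mathcal{S}^p$ are exactly those required by $\mathcal{A}^p$ (they are), and that Theorem \ref{GeneralizedRelaxationTheorem} is stated for arbitrary bounded open $U\subset\RR^N$ with $|\partial U|=0$ so that no further geometric assumption on $U$ enters. Once those are noted, the one-line chain ``$f\in\mathcal{S}^p$ $\Rightarrow$ (by Theorem \ref{SingularGrowthConditionTheorem}) $f\in\mathcal{A}^p$ $\Rightarrow$ (by Theorem \ref{GeneralizedRelaxationTheorem}) \eqref{IntRep1}, \eqref{IntRep2} with $\overline{f}=\Z f=\mathcal{Q}f$'' is the entire argument.
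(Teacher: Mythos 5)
Your proposal is correct and follows exactly the paper's route: Theorem \ref{SingularGrowthConditionTheorem} gives $\mathcal{S}^p\subset\mathcal{A}^p$ (since $(\hat{\rm C}_2)$ implies (C$_2$) while the remaining defining conditions coincide), and then Theorem \ref{GeneralizedRelaxationTheorem} applies directly. Nothing further is needed.
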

Corollary \ref{corollary2} slightly improves  \cite[Corollaire 2.22]{oah-jpm09} (see also \cite[Theorem 1.3]{oah-jpm08a}).
\end{remark}

To prove Theorem \ref{SingularGrowthConditionTheorem} we need the following two lemmas. The first is a special case of a theorem due to  Dacorogna and Ribeiro (see \cite[Theorem 1.3]{daco-rib04}, see also \cite[Theorem 10.29 p. 462]{dacorogna08})  and the second is a special case of a theorem due to Ben Belgacem (see \cite{benbelgacem96},  see also \cite[Th\'eor\`eme 3.25]{oah-jpm09} for a proof). 

\begin{lemma}\label{lemma1}
Given $t_1<t_2$ and $\xi\in\MM^{N\times N}$ with $t_1<\det\xi<t_2$ there exists $\varphi\in W^{1,\infty}_0(Y;\RR^N)$ such that $\det(\xi+\nabla\varphi(y))\in\{t_1,t_2\}$ for a.e. $y\in Y$.
\end{lemma}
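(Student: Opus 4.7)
The plan is to apply the existence theorem of Dacorogna and Ribeiro for the Dirichlet problem $\nabla u(y) \in E$ a.e.\ in $Y$, $u(y) = \xi y$ on $\partial Y$, where
\[
E := \{\eta \in \MM^{N\times N} : \det\eta \in \{t_1,t_2\}\}.
\]
Their theorem reduces the construction of $u \in W^{1,\infty}(Y;\RR^N)$ to checking a compatibility condition on $\xi$, namely that $\xi$ lies in a suitable rank-one convex hull of $E$. Once $u$ is produced, $\varphi(y) := u(y) - \xi y$ belongs to $W^{1,\infty}_0(Y;\RR^N)$ and satisfies $\det(\xi + \nabla\varphi(y)) = \det(\nabla u(y)) \in \{t_1,t_2\}$ a.e., which is the conclusion.

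To verify the compatibility I would exhibit a rank-one decomposition $\xi = \lambda A + (1-\lambda) B$ with $\det A = t_1$, $\det B = t_2$ and $A - B = a \otimes b$ of rank one. Using the identity
\[
\det(\xi + t\, a \otimes b) = \det\xi + t\, a^{T}\mathrm{cof}(\xi)\, b,
\]
which is affine in $t$, one selects $a$ and $b$ with $a^{T}\mathrm{cof}(\xi) b \neq 0$ (a generic choice works unless $\mathrm{cof}(\xi) = 0$, a degenerate situation handled by an initial small rank-one perturbation of $\xi$) and solves for the values of $t$ making the determinant equal to $t_1$ and $t_2$ respectively. The weight $\lambda = (t_2 - \det\xi)/(t_2 - t_1) \in (0,1)$ is then forced by the linearity of $\det$ along the rank-one line and produces precisely the rank-one convex hull certificate required as input by Dacorogna-Ribeiro.

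The main difficulty, which is the content of the Dacorogna-Ribeiro theorem and is used here as a black box, is promoting this rank-one decomposition to a genuine $W^{1,\infty}_0$ solution. One first constructs a piecewise affine simple laminate realising the decomposition inside $Y$, then corrects the boundary values back to $\xi y$ by introducing transition layers concentrated on a \emph{bad set} on which $\det(\nabla u) \in (t_1,t_2)$. Iterating this correction on the bad set via a Vitali covering by rescaled copies of $Y$—while keeping $\|\nabla u\|_{L^\infty}$ uniformly bounded—and invoking a Baire category argument on a closed ball of $W^{1,\infty}_0(Y;\RR^N)$ equipped with the uniform norm yields the desired $u$ as the limit of the iteration.
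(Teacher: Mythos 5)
The paper does not prove this lemma; it simply identifies it as a special case of the theorem of Dacorogna and Ribeiro (Theorem 1.3 in their paper, Theorem 10.29 in Dacorogna's book), and your proposal invokes exactly the same result. The rank-one-hull verification and Baire-category sketch you add is a reasonable unpacking of what that theorem does internally, but it is not required here: the cited theorem is already stated directly for the constraint $\det\nabla u\in\{t_1,t_2\}$ under the hypothesis $t_1<\det\xi<t_2$, so a bare citation suffices, as in the paper.
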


\begin{lemma}\label{lemma2}
Let $f:\RR^N\times\MM^{N\times N}\to[0,+\infty]$ be a normal integrand. If $f$ satisfies {\rm($\hat{\rm C}_2$)} then $\mathcal{R} f$ is of $p$-polynomial growth, where for every $x\in\RR^N$, $\mathcal{R}f(x,\cdot)$ denotes the rank-one convex envelope of $f(x,\cdot)$, i.e., the greatest rank-one convex function which less than or equal to $f(x,\cdot)$.
\end{lemma}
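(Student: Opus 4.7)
The plan is to show $\mathcal{R}f(x,\xi)\leq c(1+|\xi|^p)$ by first replacing $f$ by an $x$-independent dominating integrand, then reducing to diagonal matrices via orthogonal invariance, and finally exhibiting an explicit finite laminate of depth $N$. Define
\[
\tilde g(\xi):=\beta(1+|\xi|^p)\hbox{ if }|\det\xi|\geq\alpha,\quad\tilde g(\xi):=+\infty\hbox{ otherwise.}
\]
By $(\hat{\rm C}_2)$, $f(x,\xi)\leq\tilde g(\xi)$ for every $(x,\xi)\in\RR^N\times\MM^{N\times N}$ (trivially where $\tilde g=+\infty$), hence $\mathcal{R}f(x,\cdot)\leq\mathcal{R}\tilde g$, and it suffices to prove $\mathcal{R}\tilde g(\xi)\leq c(1+|\xi|^p)$.

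Since $|U\xi V^T|=|\xi|$ and $|\det(U\xi V^T)|=|\det\xi|$ for orthogonal $U,V$, the majorant $\tilde g$ is invariant under $\xi\mapsto U\xi V^T$. The map $\zeta\mapsto U\zeta V^T$ preserves rank, so $\zeta\mapsto\mathcal{R}\tilde g(U\zeta V^T)$ is rank-one convex and dominated by $\tilde g$; by maximality it equals $\mathcal{R}\tilde g$, so $\mathcal{R}\tilde g$ is orthogonally invariant. Writing $\xi=UDV^T$ via the singular value decomposition with $D=\hbox{diag}(\sigma_1,\ldots,\sigma_N)$ and $\sigma_i\geq 0$, it suffices to estimate $\mathcal{R}\tilde g(D)$. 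I then set $t_k:=\sigma_k+\alpha^{1/N}$ and, for each $\eta\in\{\pm1\}^N$, define
\[
D_\eta:=D+\sum_{k=1}^{N}\eta_k t_k(e_k\otimes e_k).
\]
Iterating the elementary splitting $M=\frac{1}{2}(M+t_ke_k\otimes e_k)+\frac{1}{2}(M-t_ke_k\otimes e_k)$ for $k=1,\ldots,N$, whose summands differ by the rank-one matrix $2t_ke_k\otimes e_k$, exhibits $D=2^{-N}\sum_\eta D_\eta$ as a depth-$N$ laminate, and rank-one convexity of $\mathcal{R}\tilde g$ together with $\mathcal{R}\tilde g\leq\tilde g$ gives $\mathcal{R}\tilde g(D)\leq 2^{-N}\sum_\eta\tilde g(D_\eta)$.

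The choice of $t_k$ ensures $|\sigma_k+\eta_k t_k|\geq\alpha^{1/N}$, so $|\det D_\eta|=\prod_k|\sigma_k+\eta_k t_k|\geq\alpha$ and hence $\tilde g(D_\eta)=\beta(1+|D_\eta|^p)$; meanwhile $(\sigma_k+\eta_k t_k)^2\leq(2\sigma_k+\alpha^{1/N})^2\leq 8\sigma_k^2+2\alpha^{2/N}$ gives $|D_\eta|^2\leq 8|\xi|^2+2N\alpha^{2/N}$, from which the desired $p$-polynomial bound on $\mathcal{R}\tilde g(D)$, and hence on $\mathcal{R}f(x,\xi)$, follows. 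The main delicate point is the orthogonal-invariance argument for $\mathcal{R}\tilde g$, which rests on the observation that $\zeta\mapsto U\zeta V^T$ preserves rank-one connectedness and therefore sends rank-one convex functions to rank-one convex functions; granted this, the remainder of the proof is an explicit depth-$N$ laminate construction combined with a purely algebraic estimate.
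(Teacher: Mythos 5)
Your proof is correct. Note that the paper itself supplies no proof of Lemma \ref{lemma2}: it cites Ben Belgacem's thesis and the companion preprint (Th\'eor\`eme 3.25 there), so there is no in-text argument to compare against. Your construction is a clean, self-contained replacement. The reduction to the $x$-independent majorant $\tilde g$ is legitimate (since $f(x,\cdot)\leq\tilde g$ forces $\mathcal{R}f(x,\cdot)\leq\mathcal{R}\tilde g$ by maximality of the envelope), the orthogonal-invariance step is the genuinely delicate point and you handle it correctly ($\zeta\mapsto U\zeta V^T$ is a linear rank-preserving bijection, so precomposition preserves rank-one convexity, and maximality forces $\mathcal{R}\tilde g(U\cdot V^T)=\mathcal{R}\tilde g$), and the explicit depth-$N$ laminate does the rest: the shift $t_k=\sigma_k+\alpha^{1/N}$ is tuned so every leaf $D_\eta$ has diagonal entries of modulus $\geq\alpha^{1/N}$, hence $|\det D_\eta|\geq\alpha$, so $\tilde g(D_\eta)$ is controlled by $(\hat{\rm C}_2)$. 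The arithmetic is fine: $(\sigma_k+\eta_kt_k)^2\leq(2\sigma_k+\alpha^{1/N})^2\leq 8\sigma_k^2+2\alpha^{2/N}$ gives $|D_\eta|^2\leq 8|\xi|^2+2N\alpha^{2/N}$ (using $|D|=|\xi|$ under the Frobenius norm), and raising to the power $p/2$ yields a constant depending only on $p$, $N$, $\alpha$, $\beta$. One small thing worth making explicit if this were to be written out fully: the rank-one convexity inequality for $\mathcal{R}\tilde g$ is applied along the binary tree of splittings, with $[0,+\infty]$-valued terms, but since all quantities are nonnegative no $\infty-\infty$ cancellation occurs and the iteration $\mathcal{R}\tilde g(D)\leq 2^{-N}\sum_\eta\tilde g(D_\eta)$ is unambiguous.
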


\begin{proof}[Proof of Theorem {\rm\ref{SingularGrowthConditionTheorem}}]
Fix any $x\in\RR^N$ and any $\xi\in\MM^{N\times N}$. Clearly, if $|\det\xi|\geq\alpha$ then $\Z f(x,\xi)<+\infty$. On the other hand, if $|\det\xi|<\alpha$ then, by Lemma \ref{lemma1}, there exists $\varphi\in W^{1,\infty}_0(Y;\RR^N)$ such that $|\det(\xi+\nabla\varphi(y)|=\alpha$ for a.e. $y\in Y$, and using  ($\hat{\rm C}_2$) we see that
\[
\Z f(x,\xi)\leq\int_Y f(x,\xi+\nabla\varphi(y))dy\leq2^p\beta\left(1+|\xi|^p+\|\nabla\varphi\|^p_{L^p(Y;\RR^N)}\right)<+\infty.
\]
Thus $\Z f(x,\xi)<+\infty$ for all $\xi\in\MM^{N\times N}$, i.e., $\Z f(x,\cdot)$ is finite. From Proposition \ref{FonsecaProperties}(b) we deduce that $\Z f(x,\cdot)$ is rank-one convex. Hence $\Z f(x,\cdot)\leq \mathcal{R} f(x,\cdot)$ for all $x\in\RR^N$, i.e., $\Z f\leq\mathcal{R} f$, and the theorem follows from Lemma \ref{lemma2}. 
\end{proof}


\bibliographystyle{acm}

\end{document}